\newtheorem{theorem}{Theorem}[section]
\newtheorem{lemma}{Lemma}
\newcommand{\R}{\mathbb R}
\newtheorem*{teoA}{Theorem A}
\title{\Large EXISTENCE OF MULTIPLE SOLUTIONS FOR A QUASILINEAR ELLIPTIC PROBLEM  }
\author{Jorge Cossio$^a$, Sigifredo Herrón$^a$ and Carlos V\'elez$^a$}
\date{}
\begin{document}
\parindent= 0pt
\maketitle \footnotetext{$^a$ Escuela de Matem\'aticas, Universidad Nacional de Colombia Sede
Medell\'in,
 Apartado A\'ereo 3840,
 Medell\'{\i}n, Colombia. \\
 e-mail addresses: jcossio@unal.edu.co, sherron@unal.edu.co,
cauvelez@unal.edu.co\\
 This research was partially supported by
Colciencias, Fondo Nacional de Financiamiento para la Ciencia, la
Tecnolog\'\i a y la Innovaci\'on Francisco Jos\'e de Caldas. Project
``Ecuaciones diferenciales dispersivas y el\'\i pticas no lineales",
Code 111865842951.}

\begin{abstract}
In this paper we prove the existence of multiple solutions for a
quasilinear elliptic boundary value problem, when the $p$-derivative
at zero and the $p$-derivative at infinity of the nonlinearity are
greater than the first eigenvalue of the $p$-Laplace operator. Our
proof uses bifurcation from infinity and bifurcation from zero to
prove the existence of unbounded branches of positive solutions
(resp. of negative solutions). We show the existence of multiple
solutions and we provide qualitative properties of these solutions.

\textbf{Key Words and phrases}: quasilinear elliptic equations,
bifurcation theory, multiplicity of solutions.

\textbf{2010 Mathematics Subject classification}: 35B32, 35J62,
35J92.

\end{abstract}

\section{Introduction}
In this paper we study the existence of multiple solutions for the
quasilinear elliptic boundary value problem
\begin{equation}\label{PL}
\begin{cases}
\begin{aligned}
 %\begin{equation}
 \Delta _pu + f(u) &= 0 \quad \ \text{ in }\, \Omega,\\
 u &=0  \quad \ \text{ on }\, \partial \Omega,
\end{aligned}
\end{cases}
\end{equation}
where $\Omega \subset \mathbb{R}^N $, $N \ge 2$, is a bounded and
smooth domain, $1< p <2$,  $\Delta _p u=$ div$(|\nabla u|^{p-2}\nabla u)$ is the $p$-Laplace operator, and $f :\mathbb{R} \to \mathbb{R}$ is a nonlinear function such that $f(0)=0$ and
\begin{itemize}
\item[$(f_1)$] $ |f(t) - f(s)| \le C_f |t-s|^{p-1},\ \ \forall s,t\in \mathbb{R}$,
\item[$(f_2)$] $ f^{'}_{p}(0):= \lim _{t\rightarrow 0} \frac{f(t)}{|t|^{p-2} t} > \lambda_1 (p), $
\item[$(f_3)$] $ f^{'}_{p}(\infty):= \lim _{t\rightarrow \infty} \frac{f(t)}{|t|^{p-2} t} > \lambda_1 (p),$
\item[$(f_4)$] there exists a positive number $\alpha $
 such that $f(\alpha ) \leq 0 \leq f(-\alpha )$,
\end{itemize}
where $C_f :=\sup_{s\neq t} |f(s)-f(t)| /|s-t|^{p-1} \in
\mathbb{R}$, and $\lambda_1 (p)$ denotes the first eigenvalue of the
problem
\begin{equation}\label{EPL}
\begin{cases}
\begin{aligned}
 %\begin{equation}
 - \Delta _pu  &= \lambda |u|^{p-2}\ u \quad \ \text{ in }\, \Omega,\\
 u &=0  \qquad \qquad \ \ \  \text{ on }\, \partial \Omega.
\end{aligned}
\end{cases}
\end{equation}
\vskip 5pt
We call $ f^{'}_{p}(0)$ the $p$-derivative at zero and $ f^{'}_{p}(\infty)$ the $p$-derivative at infinity.
\vskip 7pt

We prove that problem (\ref{PL}) has at least four nontrivial
solutions, two of them are positive and the other two are negative.
We also found some upper and lower bounds for the $L^\infty-$ norm
of these solutions.

\begin{teoA}
\label{teoA} If $f$ satisfies $(f_1 )$, $(f_2 )$, $(f_3 )$, and $(f_4)$
then problem \eqref{PL} has at least four nontrivial solutions $u_1 ,
u_2  , v_1 ,$ and $v_2$. Moreover, solutions $u_1$ and
$u_2$ are positive on $\Omega$, and solutions $v_1$ and $v_2$ are
negative on $\Omega$. In addition,
$$\| u_2 \|_{L^{\infty }}   <\alpha < \| u_1 \|_{L^{\infty }} $$
and
$$  \| v_2
\|_{L^{\infty }} < \alpha  < \| v_1 \|_{L^{\infty }} .$$
\end{teoA}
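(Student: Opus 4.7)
I would embed (\ref{PL}) into the one-parameter family
\[
    -\Delta_p u \;=\; \lambda\, f(u) \quad \text{in } \Omega, \qquad u = 0 \quad \text{on } \partial\Omega, \qquad \lambda \geq 0,
\]
whose $\lambda=1$ slice is precisely the original problem. Hypothesis $(f_1)$ and the $C^{1,\beta}$ regularity theory for the $p$-Laplacian allow one to recast this family as a compact fixed-point equation $u = K_\lambda(u)$ on $C_0^1(\overline\Omega)$. Hypotheses $(f_2)$ and $(f_3)$ then single out two critical parameters,
$\lambda_0 := \lambda_1(p)/f'_p(0) < 1$ and $\lambda_\infty := \lambda_1(p)/f'_p(\infty) < 1$,
at which the family linearises to the eigenvalue problem (\ref{EPL}); the simplicity and isolation of $\lambda_1(p)$ make these genuine bifurcation points, respectively from zero and from infinity.

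Next, I would apply Rabinowitz's global bifurcation theorem, in the $p$-Laplacian version developed by Del Pino--Man\'asevich, together with its counterpart for bifurcation from infinity, to extract four unbounded continua in $\R\times C_0^1(\overline\Omega)$: $\mathcal{C}_0^\pm$ emanating from $(\lambda_0,0)$, and $\mathcal{C}_\infty^\pm$ bifurcating from $(\lambda_\infty,\infty)$. Since the first eigenfunction of $\lambda_1(p)$ is strictly one-signed, the strong maximum principle and Hopf's lemma for the $p$-Laplacian propagate positivity (respectively negativity) along each full continuum, so $\mathcal{C}_0^+ \cup \mathcal{C}_\infty^+$ lies in the open positive cone of $C_0^1(\overline\Omega)$ and $\mathcal{C}_0^- \cup \mathcal{C}_\infty^-$ in the open negative cone.

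At this point I would invoke $(f_4)$ to separate the continua by $L^\infty$-size. The constant $\alpha$ is a supersolution of the parametric problem for every $\lambda\geq 0$ (because $f(\alpha)\leq 0$), and $-\alpha$ is a subsolution, so a sub/supersolution/comparison argument confines the small-solution branches $\mathcal{C}_0^\pm$ to the order interval where $\|u\|_\infty<\alpha$, while the branches $\mathcal{C}_\infty^\pm$, carrying arbitrarily large solutions near $\lambda_\infty<1$, are connected and cannot intersect $\mathcal{C}_0^\pm$ and therefore live in $\{\|u\|_\infty>\alpha\}$. Connectedness and unboundedness of each continuum then force $\mathcal{C}_0^+$ to cross $\{\lambda=1\}$ at a positive solution $u_2$ with $\|u_2\|_\infty<\alpha$, and $\mathcal{C}_\infty^+$ to cross $\{\lambda=1\}$ at $u_1$ with $\|u_1\|_\infty>\alpha$. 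The same argument applied to the negative continua produces $v_2$ and $v_1$.

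The principal difficulty, and the step I expect to be most delicate, is this last separation-and-crossing argument. Rabinowitz's alternative only guarantees that each continuum is unbounded in $\R\times C_0^1(\overline\Omega)$, so one has to rule out a priori the scenarios in which a branch from zero blows up vertically at some $\lambda<1$, or a branch from infinity turns back before reaching $\lambda=1$. Excluding these requires combining the a priori $L^\infty$-bound produced by the supersolution $\alpha$ on $\mathcal{C}_0^\pm$ with the linear $p$-growth control $(f_1)$ (which prevents finite-$\lambda$ blow-up of $\mathcal{C}_\infty^\pm$) and with the strict inequalities in $(f_2)$--$(f_3)$ (which prevent the continua from collapsing back at another eigenvalue parameter), so that the projection of each continuum onto the $\lambda$-axis is forced to contain $\{\lambda=1\}$.
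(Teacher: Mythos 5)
Your overall architecture --- parametrize by $\lambda$, run global bifurcation from zero at $\lambda_1/f_p'(0)$ and from infinity at $\lambda_1/f_p'(\infty)$ (both $<1$ by $(f_2)$, $(f_3)$), separate the four continua by the size of $\|u\|_{L^\infty}$ relative to $\alpha$, and force each continuum to cross $\lambda=1$ by connectedness plus a priori bounds --- is exactly the paper's. The genuine gap is in the separation step. The claim that $\alpha$ being a supersolution ``confines the small-solution branches to the order interval where $\|u\|_\infty<\alpha$'' is not a valid comparison argument here: $f$ is not monotone and the problem has no uniqueness, so there is no order-theoretic reason why a continuum that starts below $\alpha$ must stay below $\alpha$ (solutions with $\|u\|_\infty>\alpha$ do exist --- $u_1$ is one). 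Likewise, ``cannot intersect $\mathcal{C}_0^\pm$ and therefore live in $\{\|u\|_\infty>\alpha\}$'' is a non sequitur: disjointness from the small branches yields no lower bound on $\|u\|_\infty$. What actually makes the separation work, and what is missing from your plan, is the pointwise exclusion of the level set: for every $\lambda>0$ no solution of the parametric problem satisfies $\|u\|_{L^\infty}=\alpha$ (Lemma \ref{est-sup-norm2}). This is proved by applying V\'azquez's strong maximum principle to $w=\alpha-u\ge 0$, which by $(f_4)$ and $(f_1)$ satisfies $\Delta_p w\le \lambda C_f w^{p-1}$; the exponent $p-1<1$ gives the divergent Osgood integral the nonlinear maximum principle requires, so $u<\alpha$ strictly, contradicting $\|u\|_\infty=\alpha$. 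Once the level $\alpha$ is forbidden, one needs the continuity of $(u,\lambda)\mapsto\|u\|_{L^\infty}$ on the closure of each continuum (automatic in your $C^1_0$ setting; the paper must prove it in $W_0^{1,p}$ via an interpolation inequality, Lemma \ref{continuity-sup-norm}, together with the norm equivalences of Lemma \ref{L2}) and one point of each continuum on the correct side of $\alpha$ near its bifurcation point; connectedness of the image of $\|\cdot\|_{L^\infty}$ then pins the whole continuum on that side.

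A secondary caution: you obtain one-signedness by ``propagating'' positivity along each continuum from the first eigenfunction; for the $p$-Laplacian closing the corresponding open-and-closed argument globally is delicate. The paper sidesteps it by bifurcating from the truncated nonlinearities $f^{+}$ and $f^{-}$, so that every nontrivial solution on the branch is automatically one-signed by the maximum principle (Lemma \ref{u-sln-f-mas-tau}) and is then a solution of the untruncated problem.
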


{\bf Remark:} Actually, the argument we present below allows to
prove a  more general result: if $f$ satisfies $(f_1 )$, $(f_2 )$,
$(f_3 )$, and
\begin{itemize}
\item[$(f_4')$] there exist numbers $\alpha>0$ and $\widetilde{\alpha} <0$
 such that $f(\alpha ) \leq 0 \leq f(\widetilde{\alpha} )$,
\end{itemize}
then problem \eqref{PL} has at least four nontrivial solutions $u_1
, u_2  , v_1 ,$ and $v_2$. Moreover, solutions $u_1$ and $u_2$ are
positive on $\Omega$, and solutions $v_1$ and $v_2$ are negative on
$\Omega$. In addition,
$$\| u_2 \|_{L^{\infty }}   <\alpha < \| u_1 \|_{L^{\infty }} $$ and
$$  \| v_2
\|_{L^{\infty }} <| \widetilde{\alpha } | < \| v_1 \|_{L^{\infty }}
.$$ For the sake of simplicity, from now on we assume hypothesis
$(f_4)$ instead of $(f_4')$ (i.e. $\widetilde{\alpha }=-\alpha $).

\vskip 8pt

\vskip 8pt

 Our proof of Theorem A uses bifurcation from infinity and
bifurcation from zero, applied to the problem
\begin{equation}\label{PL2}
\begin{cases}
\begin{aligned}
 \Delta_p \, u +  \lambda f(u)&=0 \  \quad \text{ in }\, \Omega , \\
 u &=0  \quad \ \text{ on }\, \partial \Omega ,
\end{aligned}
\end{cases}
\end{equation}
where $\lambda > 0$.
\vskip 8pt

Theorem A is an extension to quasilinear equations of a result due
to J. Cossio, S. Herr\'on, and C. V\'elez (see \cite{CHV1}) for the
semilinear case. A key ingredient to extend the semilinear result to
our situation is to prove that for problem (\ref{PL2}) there exist
unbounded branches of positive solutions (resp. of negative
solutions)  emanating from the bifurcation points $(\infty ,\lambda
_1/f_p'(\infty))$ and $(0 ,\lambda _1/f_p'(\infty))$ (see Theorem
\ref{AM-lemma-around-infty} and Theorem \ref{AM-lemma-around-zero}
in Section 3 below). Theorem \ref{AM-lemma-around-infty} is very
much inspired by a corresponding result in the semilinear case due
to Ambrosetti and Hess (see \cite{AH} and Section 4.4 in \cite{AM}),
and by Theorem 4.1 in \cite{AGP}. Although our proof of Theorem
\ref{AM-lemma-around-infty} follows the ideas from \cite{AH},
\cite{AM} and \cite{AGP}, our arguments have several differences
with respect to these references, as will be better explained
 in Section 3.
Theorem \ref{AM-lemma-around-zero}, on the other
 hand, essentially comes from the ideas by Del Pino and Manásevich in
 \cite{DM}.

%%Theorem A is an extension to quasilinear equations of a result due to J. Cossio, S. Herr\'on, and C. V\'elez (see \cite{CHV1}) for
%%the semilinear case. A key ingredient to extend the semilinear result to our situation is the bifurcation theorem
%of M. Del Pino and R. Man\'asevich mentioned above,
%a nonlinear version of the strong maximum principle due to J. L. V\'azquez (see \cite{V}),
%and an interpolation theorem due to A. Le (see Theorem \ref{interp} below).

\vskip 8pt

The existence of solutions to quasilinear elliptic problems like
\eqref{PL2} has been widely investigated. Let us mention, besides
\cite{AGP} and \cite{DM}, papers \cite{DGTU}, \cite{Drab} and
\cite{DQ}, the books \cite{FNSS} and \cite{DKT}, and the references
therein.
%({\bf{????? Sigifredo es el artista para estas referencias}}).
%\cite{AH}, \cite{CL}, \cite{CC1}, \cite{CC2} \cite{CCV1}, \cite{CCV2}, \cite{CHV}, \cite{CH},
%\cite{DM}, \cite{Ch}, \cite{Chang1}, \cite{Drab}, \cite{Ge}, \cite{GGL}, \cite{MaXu}).
A. Ambrosetti et al. in \cite{AGP} showed the existence of an unbounded branch of positive solutions of problem \eqref{PL2} emanating
from either zero or infinity  when
$f(u) \backsimeq u^{p-1}$ near $0$ or near infinity; they used a priori estimates and topological arguments. In \cite{DM} M. Del Pino and R.
Man\'asevich proved that problem \eqref{PL} has at least one nontrivial solution when
\begin{equation}
 f^{'}_{p}(0) < \lambda_1 (p) <  f^{'}_{p}(\infty).
\end{equation}
P. Dr\'abek in \cite{Drab} and S. Fucik et al. in \cite{FNSS}, focus
on the existence of solutions to problem \eqref{PL2} in the case
when $f'_p(\infty)$ is not equal to an eigenvalue of $-\Delta _p$.
By using topological arguments based on degree theory, they found
conditions that allow to show that problem \eqref{PL2} has at least
one solution for $\lambda$ either below $\lambda_1(p)$ or between
$\lambda_1(p)$ and $\lambda_2(p)$. In \cite{DGTU}, Drabek et al.
study a non-homogeneous version of problem \eqref{EPL} when
parameter $\lambda $ is near $\lambda _1 (p)$. More recently, Del
Pezzo and Quaas in \cite{DQ} generalize the results from \cite{DM}
to nonlocal problems involving fractional p-Laplacian operators.
Contrary to conditions in \cite{DM}, \cite{DGTU}, \cite{Drab},
\cite{FNSS}, and \cite{DQ}, here the $p$-derivative at zero and the
$p$-derivative at infinity are both arbitrarily greater than the
first eigenvalue of the $p$-Laplace operator.

\vskip 8pt

Regarding quasilinear equations in the radially symmetric case,
there has been a lot of research. We mention some works and refer
the reader to references therein. For instance, J. Cossio and S.
Herr\'on in \cite{CH} studied problem \eqref{PL} when $\Omega$ is
the unit ball in $\R^N$ and the $p$-derivative of the nonlinearity
at zero is greater than $\mu_{j}(p)$, the $j$-radial eigenvalue of
the $p$-Laplace operator, and the $p$-derivative at infinity is
equal to the $p$-derivative at zero. They showed that problem
\eqref{PL} has $4j-1$ radially symmetric solutions.  In such a
reference, the authors used bifurcation theory and the fact that in
the radially symmetric case \eqref{PL} reduces to an ordinary
differential equation. J. Cossio, S. Herr\'on, and C. V\'elez  in
\cite{CHV2} studied problem \eqref{PL} in the radially symmetric
case, when $\Omega$ is the unit ball in $\R^N$ and the problem is
$p$-superlinear at the origin. They proved that problem \eqref{PL}
has infinitely many solutions. The main tool that they used is the
shooting method.  M. Del Pino and R. Man\'asevich in \cite{DM}
studied the existence of multiple nontrivial solutions for a
quasilinear boundary value problem under radial symmetry; they
extended the global bifurcation theorem of P. Rabinowitz (see
\cite{R2}) and proved the existence of nontrivial solutions for that
kind of problems. In \cite{GS}, García-Melián and Sabina de Lis
study uniqueness for quasilinear problems in radially symmetric
domains.

\vskip 8pt

The paper is organized as follows: In Section 2 we establish some
lemmas which will be used to prove Theorem A. We apply a nonlinear
version of the strong maximum principle due to J. L. V\'azquez (see
\cite{V}) to prove that if $u$ is a weak solution to problem
(\ref{PL2}) then $\|u\|_{L^{\infty}} \ne \alpha$. We also apply an
interpolation theorem due to A. Le (see Theorem \ref{interp}) to
show that the function $ (u,\lambda ) \mapsto \|u \| _{L^{\infty}}$
is continuous, where $ (u,\lambda )$ is a solution of (\ref{PL2}).
In Section 3 we prove Theorem A.

\section{Preliminary results }

Let us recall the definition of weak solutions to problem \eqref{PL2}. Given $\lambda > 0$, we say a function $u\in W_0^{1, p}$
solves \eqref{PL2} in the weak sense provided that
\begin{equation}
\label{weak}
\int_{\Omega} |\nabla u|^{p-2} \nabla u \cdot \nabla v \ dx = \int_{\Omega} \lambda f(u) v\ dx, \qquad \forall v\in W_0^{1, p}.
\end{equation}

Suppose $u\in W_0^{1, p}$ solves \eqref{PL2} in the weak sense. Hypothesis $(f_1)$ implies that
\begin{equation}
\label{11}
 \Delta_p \, u = -  \lambda f(u) \leq \lambda C_f |u|^{p-1} \in L^1_{\text{Loc}} (\Omega)
\end{equation}
and
\begin{equation}
\label{12}
 - u\, \Delta_p \, u =  \lambda \, u\, f(u) \leq \lambda C_f |u|^{p}.
\end{equation}
From \eqref{11} and \eqref{12} it follows that $u\in L^{\infty}
(\Omega )$ (see, for instance, Theorem 6.2.6, p. 737, of \cite{GP}).

The following lemma states a regularity result of any weak solution $u$ of \eqref{PL2}. This result is a particular
case of a theorem of Lieberman \cite{Li} (cf. also Di Benedetto \cite{Di}).
\begin{lemma}
\label{L1} If $u\in W^{1,p}(\Omega) \cap L^{\infty}$ and $\Delta _pu \in L^{\infty}$ then
$u\in C^{1,\beta}(\overline{\Omega})$ with $\beta \in (0,1)$ and
$$ \|u\|_{C^{1,\beta}(\overline{\Omega})}\le C, $$
with $C>0$; both $\beta$ and $C$ depend only on $N, p, \lambda, \|u\|_{L^{\infty}},$ and $\| \Delta_p u\|_{L^{\infty}}.$
\end{lemma}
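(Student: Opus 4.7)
The plan is to invoke directly the regularity theorem of Lieberman \cite{Li}, which the authors already identify as containing this lemma as a particular case. Lieberman proves $C^{1,\beta}(\overline{\Omega})$ regularity with quantitative estimates for bounded weak solutions of quasilinear equations $\mathrm{div}(A(x,u,\nabla u)) = B(x,u,\nabla u)$ whose principal part has $p$-Laplacian-type structure and whose right-hand side is controlled in $L^{\infty}$, provided $\partial \Omega$ is smooth enough; this refines the interior $C^{1,\beta}$ theory of DiBenedetto \cite{Di} and Tolksdorf up to the boundary.

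First I would cast the hypotheses into Lieberman's framework by setting $A(\xi) = |\xi|^{p-2}\xi$ and viewing the equation simply as $\Delta_p u = g$ with $g \in L^{\infty}(\Omega)$, where $g := \Delta_p u$ is bounded by assumption. The vector field $A$ trivially satisfies Lieberman's ellipticity and growth inequalities with constants depending only on $p$, and the smoothness of $\partial \Omega$ assumed throughout Section 1 meets his boundary regularity requirement. Together with the a priori hypothesis $u \in W^{1,p}(\Omega) \cap L^{\infty}(\Omega)$, this places us squarely within Lieberman's assumptions.

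Second, I would verify the quantitative dependence of the constants by tracking them through Lieberman's argument: $\beta \in (0,1)$ depends only on $N$, $p$ and the boundary regularity of $\Omega$, while the global estimate constant depends additionally on $\|u\|_{L^{\infty}}$ and $\|\Delta_p u\|_{L^{\infty}}$. The parameter $\lambda$ appears here only through its eventual role as a multiplicative factor on the right-hand side of \eqref{PL2}, where it gets absorbed into the $L^{\infty}$ norm of $\Delta_p u$; hence the dependence on $\lambda$ stated in the lemma is automatic once the equation is read in the form (\ref{weak}).

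The main obstacle is not analytical but bibliographic: one must carefully align the notation and structural inequalities of \cite{Li} (phrased for a general quasilinear operator) with the clean statement given here, and confirm that the standing smoothness of $\partial \Omega$ suffices for the $C^{1,\beta}$ regularity to extend up to the boundary. Once this matching is made explicit, the lemma follows with no new analytic ingredient beyond citing Lieberman's theorem.
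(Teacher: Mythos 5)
Your proposal matches the paper exactly: the authors give no independent proof of this lemma, stating only that it is a particular case of Lieberman's boundary regularity theorem (with a nod to DiBenedetto), which is precisely the citation-and-verification route you take. Your additional remarks on tracking the dependence of $\beta$ and $C$ on the stated parameters are consistent with what the paper implicitly relies on.
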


The next lemma is a consequence of a nonlinear version of the strong maximum principle due to J. L. V\'azquez (\cite{V}).

\begin{lemma}
\label{est-sup-norm2} If $u\in C^{1,\beta}
(\overline{\Omega } )$ is a solution of \eqref{PL2} with $\lambda
>0$, then  $\| u\| _{_{L^{\infty}}}\neq \alpha  $.
\end{lemma}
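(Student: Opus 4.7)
The plan is to argue by contradiction: assume $\|u\|_{L^\infty}=\alpha$ and use hypothesis $(f_4)$ together with V\'azquez's strong maximum principle applied to a suitable non-negative auxiliary function whose zero set would correspond to the extremum being attained.

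Since $u\in C^{1,\beta}(\overline{\Omega})$ and $u=0$ on $\partial\Omega$ while $\alpha>0$, if $\|u\|_{L^\infty}=\alpha$ then either $u$ attains the value $\alpha$ at some interior point $x_0\in\Omega$, or $u$ attains $-\alpha$ at some interior point. The two cases are symmetric by virtue of $(f_4)$, so I will handle $\max_{\overline\Omega} u=\alpha$ in detail. Define $w:=\alpha-u$. Then $w\ge 0$ on $\overline\Omega$, $w(x_0)=0$, and $w\equiv\alpha>0$ on $\partial\Omega$, so $w\not\equiv 0$. Because the vector field $|\nabla(\cdot)|^{p-2}\nabla(\cdot)$ is odd, $\Delta_p w=-\Delta_p u = \lambda f(u) = \lambda f(\alpha-w)$.

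Now I use $(f_1)$: $f(\alpha-w)\le f(\alpha)+C_f\, w^{p-1}$, and $(f_4)$ gives $f(\alpha)\le 0$. Hence
\begin{equation*}
-\Delta_p w + \lambda C_f\, w^{p-1}\ge 0 \quad\text{in }\Omega,\qquad w\ge 0,\qquad w\in C^{1}(\overline\Omega).
\end{equation*}
This is exactly the hypothesis for V\'azquez's nonlinear strong maximum principle \cite{V} (with the admissible nonlinearity $\beta(s)=\lambda C_f s^{p-1}$, for which $\int_0^1 (s\beta(s))^{-1/p}\,ds=+\infty$ so V\'azquez's integrability condition is satisfied). The conclusion is that either $w\equiv 0$ in $\Omega$ or $w>0$ throughout $\Omega$. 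The first alternative is impossible because $w=\alpha>0$ on $\partial\Omega$; the second contradicts $w(x_0)=0$. Either way we reach a contradiction.

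The symmetric case $\min_{\overline\Omega} u=-\alpha$ is handled by setting $w:=u+\alpha\ge 0$ and using the bound $f(w-\alpha)\ge f(-\alpha)-C_f w^{p-1}\ge -C_f w^{p-1}$ coming from $(f_1)$ and $(f_4)$, which again yields $-\Delta_p w+\lambda C_f w^{p-1}\ge 0$ and the same V\'azquez dichotomy. The main point to get right is verifying that the differential inequality one obtains for $w$ is of the exact form required by V\'azquez's theorem, and this is precisely where the H\"older-type hypothesis $(f_1)$ matched with the sign information $(f_4)$ does all the work; beyond that, the regularity $u\in C^{1,\beta}(\overline\Omega)$ ensured by Lemma~\ref{L1} guarantees that the pointwise values used in the argument (and the strict positivity conclusion) make sense classically.
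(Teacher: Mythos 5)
Your proof is correct and follows essentially the same route as the paper: assume $\|u\|_{L^\infty}=\alpha$, form the nonnegative function $\alpha-u$ (resp. $u+\alpha$), use $(f_1)$ and $(f_4)$ to derive $\Delta_p(\alpha-u)\le \lambda C_f(\alpha-u)^{p-1}$, verify V\'azquez's integrability condition for $\xi(s)=\lambda C_f s^{p-1}$, and invoke his strong maximum principle to rule out an interior touching point. If anything, you are slightly more complete than the paper, which only treats the case where the sup norm is realized by $+\alpha$ and leaves the symmetric case $\min u=-\alpha$ implicit.
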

\begin{proof}
We argue by contradiction. Assume $u\in C^{1,\beta}
(\overline{\Omega } )$ is a solution of \eqref{PL2} with $\lambda
>0$ such that  $\| u\| _{_{L^{\infty}}}= \alpha  $. Since
$$\lambda |f(u)| \le \lambda C_f \|u\|_{L^{\infty}}^{p-1},$$
it follows that
$$ -  \Delta _pu = \lambda f(u)\in L^2_{loc}(\Omega).$$

We consider the function $\alpha - u \in C^{1}(\overline{\Omega})$, $\alpha - u \ge 0$ in $\Omega$.
$$ \Delta _p(\alpha -u) = \text{div} (|\nabla (\alpha - u)|^{p-2}\nabla (\alpha -u))= - \text{div} (|\nabla u|^{p-2}\nabla u)\in L^2_{loc}(\Omega).$$

Since $f(\alpha) \le 0$, we see that
\begin{equation}\label{Des1}
\begin{aligned}
 \Delta _p(\alpha -u) = \lambda f(u)= \lambda f(\alpha -(\alpha -u)) &\le  \lambda f(\alpha -(\alpha -u)) - \lambda f(\alpha) \\
  &\le  \lambda |f(\alpha -(\alpha -u)) - f(\alpha)| \\
  &\le \lambda C_f |\alpha -u|^{p-1}.
\end{aligned}
\end{equation}

Let us define $\xi: \R_+ \to \R$ by $\xi(s) = \lambda C_f s^{p-1}.$
We see that $\xi$ is continuous, increasing function, such that
$\xi(0)=0$ and
$$ \int_0^1 \frac{1}{(s\ \xi(s))^{\frac{1}{p}}} ds = c  \int_0^1 \frac{1}{(s\ s^{p-1})^{\frac{1}{p}}} ds = c \ln s]_0^1 = +\infty.$$

Hence, V\'azquez maximum principle (see \cite{V}) implies $\alpha -u
>0$ in $\Omega$, i.e. $u<\alpha $ in $\Omega$. Thus $\| u\|
_{_{L^{\infty}}}< \alpha  $, which contradicts our initial
assumption.
\end{proof}

In the proof of Theorem A inequalities \eqref{L2-1} and \eqref{L2-2}
of the following lemma will play an important role. These
inequalities essentially come from the arguments leading to
regularity results due to \cite{Di}, \cite{T} and \cite{Li}.

\begin{lemma}
\label{L2} There exist positive constants $K_1:= K_1(|\Omega|, N,
C_f, p, \lambda )$ and

$K_2:= K_2 (|\Omega|, N, C_f, p, \lambda )$ such that
 if $u\in W_0^{1,p}(\Omega)$ is a solution of \eqref{PL2} then
\begin{equation}
\label{L2-1} \| u\| _{W_0^{1,p}} \leq  K_1\   \| u\|_{L^{\infty}}
\end{equation}
and
\begin{equation}
\label{L2-2} \| u\| _{L^{\infty}} \leq K_2\  \| u\| _{W_0^{1,p}}.
\end{equation}
Moreover, $K_1$ and $K_2$ are bounded if $\lambda$ is bounded.
%$p_0
%= \frac{Np}{N-p}$.
\end{lemma}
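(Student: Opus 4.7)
For the first inequality \eqref{L2-1} I would simply test the weak formulation \eqref{weak} against $v = u$ itself. Since $f(0)=0$, hypothesis $(f_1)$ yields $|f(u)| \leq C_f|u|^{p-1}$ pointwise, so
\[
\|u\|_{W_0^{1,p}}^p = \lambda\int_\Omega f(u)\,u\,dx \;\leq\; \lambda C_f\int_\Omega |u|^p\,dx \;\leq\; \lambda C_f\,|\Omega|\,\|u\|_{L^\infty}^p,
\]
so one may take $K_1 := (\lambda C_f |\Omega|)^{1/p}$, which is clearly bounded whenever $\lambda$ is bounded.

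The second inequality \eqref{L2-2}, the more delicate direction, I would establish by a Moser-type iteration. Because $1<p<2\leq N$, one has $p<N$, so the Sobolev embedding $W_0^{1,p}(\Omega) \hookrightarrow L^{p^*}(\Omega)$ with $p^* := Np/(N-p)$ is available with constant $S = S(N,p)$. For each real $k\geq 1$ I would test \eqref{weak} against $v := |u|^{p(k-1)}u$, which belongs to $W_0^{1,p}$ because $u \in L^\infty$ is already known from \eqref{11}--\eqref{12}. Writing $w := |u|^{k-1}u$, so that $\nabla w = k|u|^{k-1}\nabla u$ and hence $|u|^{p(k-1)}|\nabla u|^p = k^{-p}|\nabla w|^p$, the weak identity reduces to
\[
\frac{p(k-1)+1}{k^p}\,\|w\|_{W_0^{1,p}}^p \;\leq\; \lambda C_f\,\|u\|_{L^{pk}}^{pk},
\]
which, combined with the Sobolev bound for $w$ and the identity $\|w\|_{L^{p^*}}^p = \|u\|_{L^{kp^*}}^{pk}$, yields the recursion
\[
\|u\|_{L^{kp^*}} \;\leq\; \left(A\,\frac{k^p}{p(k-1)+1}\right)^{1/(pk)}\,\|u\|_{L^{pk}},
\]
with $A := S\,\lambda C_f$ depending on $N$, $p$, and $\lambda C_f$.

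Iterating along $k_n := \chi^n$ with $\chi := N/(N-p) > 1$, starting from $k_0 = 1$, the accumulated multiplicative constant
\[
\prod_{n\geq 0}\left(A\,\frac{\chi^{np}}{p(\chi^n-1)+1}\right)^{1/(p\chi^n)}
\]
converges, since on taking logs one obtains a series dominated by a geometric one of ratio $\chi^{-1}$; hence
\[
\|u\|_{L^\infty} \;=\; \lim_{n\to\infty}\|u\|_{L^{p\chi^n}} \;\leq\; C'\,\|u\|_{L^p},
\]
with $C' = C'(N,p,\lambda C_f)$ remaining bounded when $\lambda$ is bounded. Combined with Poincar\'e's inequality $\|u\|_{L^p}\leq C(|\Omega|,p)\,\|\nabla u\|_{L^p}$, this produces a $K_2$ with the required dependence. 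The main technical difficulty is the careful bookkeeping of constants through the iteration and justifying $v = |u|^{p(k-1)}u$ as an admissible test function for arbitrary $k\geq 1$; the latter is handled by first working with bounded truncations and passing to the limit using $u \in L^\infty$. Alternatively, one may directly invoke the quantitative $L^\infty$-regularity theorem for weak solutions of $-\Delta_p u = g$ with $|g|\leq C|u|^{p-1}$ due to Serrin, Trudinger and DiBenedetto \cite{Di}, and Lieberman \cite{Li}, as the statement of the lemma itself suggests.
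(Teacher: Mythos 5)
Your proposal is correct and follows essentially the same route as the paper: inequality \eqref{L2-1} is obtained exactly as in the paper by testing \eqref{weak} with $v=u$ and using $|f(u)|\le C_f|u|^{p-1}$, yielding the same constant $K_1=(|\Omega|\lambda C_f)^{1/p}$. For \eqref{L2-2} the paper simply invokes a boot-strap argument (citing Theorem 6.2.6 of \cite{GP}) to get $\|u\|_{L^\infty}\le K\|u\|_{L^{p_0}}$ followed by the Sobolev embedding; your explicit Moser iteration is precisely that boot-strap carried out in detail, so the two arguments coincide in substance.
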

\begin{proof}
Let $u\in W_0^{1,p}(\Omega)$ be a solution of \eqref{PL2}. Using the definition of weak solution and hipothesis $(f_1)$ it follows that
\begin{equation}
\label{LL2} \|u\|^p_{W_0^{1,p}} = \int_{\Omega} |\nabla u|^{p-2}
\nabla u \cdot \nabla u \ dx = \int_{\Omega} \lambda u\, f(u) \ dx
\leq |\Omega| \lambda C_f \|u\|^p_{L^{\infty}}.
\end{equation}
Defining $K_1:= (|\Omega| \lambda C_f )^{\frac{1}{p}}$, inequality
\eqref{L2-1} follows from \eqref{LL2}.

\vskip 8pt

Using \eqref{11}, \eqref{12}, and a \emph{boot-strap} argument (see,
for instance, the proof of Theorem 6.2.6 in \cite{GP}) we get that
there exists a constant $K:=K(|\Omega|, N, C_f , p, \lambda )>0$,
which is bounded when $\lambda$ is bounded,  such that
\begin{equation}
\label{13}
\|u\|_{L^{\infty}} \leq K\, \|u\|_{L^{p_0}},
\end{equation}
where $p_0= \frac{Np}{N-p}$ is a critical Sobolev exponent. Since
$W_0^{1,p} (\Omega )$ is continuously embedded in $ L^{p_0} (\Omega
)$, we see that
\begin{equation}
\label{14} \|u\|_{L^{p_0}} \leq c_0 \|u\|_{W_0^{1,p}},
\end{equation}
for a constant $c_0 >0$. From \eqref{13} and \eqref{14} we get a
constant $K_2>0$ satisfying inequality \eqref{L2-2}. The proof of
Lemma \ref{L2} is complete.
\end{proof}

Let us define
\begin{equation}
S=\{ (u,\lambda )\in W_0^{1, p} (\Omega )\times \mathbb{R}:
\, \, u\neq 0 \, \,  \text{and} \, \,  u= (-\Delta_p)^{-1} (\lambda \
f(u)) \},
\end{equation}
where the inverse of the $p$-Laplace operator $L:= (-\Delta_p)^{-1} :
L^{\infty} (\Omega ) \longrightarrow  C^{1,\beta}(\overline{\Omega }
)$ is known to be a continuous and compact mapping (see \cite{Di}
and \cite{Li}). We will make use of the next lemma in the proof of
Theorem A.
\begin{lemma}
\label{continuity-sup-norm} The function $\mathcal{N}_\infty :
\overline{S}  \subset W_0^{1, p} (\Omega )\times \mathbb{R} \longrightarrow
\mathbb{R}$ defined as $ (u,\lambda ) \mapsto \|u \| _{L^{\infty}}$
is continuous.
\end{lemma}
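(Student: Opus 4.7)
The plan is to upgrade $W_0^{1,p}$-convergence of sequences in $\overline{S}$ to $L^\infty$-convergence by invoking the regularity theory encoded in Lemmas \ref{L1} and \ref{L2}. The decisive observation is that every element of $\overline{S}$ is itself a weak solution of \eqref{PL2} (possibly the trivial one): if $(w_k,\mu_k)\in S$ with $(w_k,\mu_k)\to(u,\lambda)$ in $W_0^{1,p}\times\mathbb{R}$, then applying the forthcoming uniform estimates gives $w_k\to u$ in $C(\overline{\Omega})$, so $f(w_k)\to f(u)$ in $L^\infty$ and, by continuity of $L=(-\Delta_p)^{-1}$, $u = L(\lambda f(u))$. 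Hence Lemmas \ref{L1} and \ref{L2} apply uniformly along any convergent sequence in $\overline{S}$.

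Fix a sequence $(u_n,\lambda_n)\to(u_0,\lambda_0)$ in $\overline{S}$. Since $\lambda_n$ is bounded, the constants $K_1, K_2$ from Lemma \ref{L2} can be chosen independent of $n$, so
$$\|u_n\|_{L^{\infty}}\;\le\;K_2\,\|u_n\|_{W_0^{1,p}}\;\le\;M$$
is uniformly bounded. Using $(f_1)$ together with $f(0)=0$ gives $|f(u_n)|\le C_f|u_n|^{p-1}$, hence
$$\|\Delta_p u_n\|_{L^{\infty}}\;=\;\lambda_n\,\|f(u_n)\|_{L^{\infty}}\;\le\;\lambda_n\,C_f\,M^{p-1},$$
which is also uniform in $n$. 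Lemma \ref{L1} then yields a uniform $C^{1,\beta}(\overline{\Omega})$ bound for $(u_n)$.

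By the compact embedding $C^{1,\beta}(\overline{\Omega})\hookrightarrow C(\overline{\Omega})$ (Arzel\`a--Ascoli), some subsequence of $(u_n)$ converges in $C(\overline{\Omega})$, and the limit must coincide with $u_0$ since $u_n\to u_0$ already in $W_0^{1,p}$. Therefore $\|u_n\|_{L^{\infty}}\to\|u_0\|_{L^{\infty}}$ along this subsequence, and since the same argument applies to every subsequence of $(u_n)$, the convergence holds for the full sequence, establishing continuity of $\mathcal{N}_\infty$. The only subtle step is the first one, identifying $\overline{S}$ with a set of solutions so that the uniform estimates of Lemma \ref{L2} are legitimately available; this is circular-looking (the bootstrap uses the very $C^0$-convergence one is trying to deduce), but can be resolved by first running the $C^{1,\beta}$-bound and Arzel\`a--Ascoli argument on an approximating sequence $(w_k,\mu_k)\in S$ to transfer the solution identity to the limit point before treating the general sequence.
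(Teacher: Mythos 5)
Your proof is correct, but it takes a different route from the paper's. You obtain a uniform $C^{1,\beta}(\overline{\Omega})$ bound along the sequence (via Lemma \ref{L2} to control $\|u_n\|_{L^\infty}$ and $\|\Delta_p u_n\|_{L^\infty}$, then Lemma \ref{L1}) and conclude by compactness of $C^{1,\beta}(\overline{\Omega})\hookrightarrow C(\overline{\Omega})$ together with uniqueness of the limit and a subsequence--subsequence argument. The paper instead derives a quantitative estimate: writing $u=\lambda_u^{1/(p-1)}L(f(u))$ by the $(p-1)$-homogeneity of $-\Delta_p$, it invokes an interpolation inequality of A.~Le, $\|w\|_{1,0}\le c\,\|w\|_{C^{1,\beta}}^{1-\theta}\|w\|_{W^{1,p}}^{\theta}$, bounding the $C^{1,\beta}$ factor uniformly (by the same regularity input you use, packaged as continuity and compactness of $L:L^\infty\to C^{1,\beta}$) and sending the $W_0^{1,p}$ factor to zero. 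Your argument is more elementary in that it avoids the interpolation theorem, at the price of being purely qualitative (no modulus of continuity for $\mathcal{N}_\infty$), which is all the lemma asserts and all the paper later uses. Two small points: the ``circularity'' you flag is not really there --- each $u_n\in\overline{S}$ is already a weak solution (this follows from $(f_1)$, which gives $f(w_k)\to f(u)$ in $L^{p'}$ when $w_k\to u$ in $W_0^{1,p}$, plus continuity of $L$ on $L^{p'}$; no uniform convergence is needed to transfer the solution identity), so Lemmas \ref{L1} and \ref{L2} apply directly to every term of your sequence; and you should note that the exponent $\beta$ in Lemma \ref{L1} can be fixed uniformly since the data $\|u_n\|_{L^\infty}$, $\|\Delta_p u_n\|_{L^\infty}$, $\lambda_n$ stay in a bounded set, which is the same uniformity the paper relies on.
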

\begin{proof}
We commence by observing that if $(u, \lambda )\in W_0^{1, p} (\Omega )\times \mathbb{R}$ is a limit point of $S$ then $u= (-\Delta_p)^{-1} (\lambda f(u))$, and so $\|u \| _{L^{\infty}}$ is
well-defined on all $\overline{S}$. Let us take $(u, \lambda _u),
(v_n, \lambda _{v_n}) \in \overline{S}$ such that $(v_n, \lambda _{v_n}) \to (u, \lambda _u)$. Let us try to estimate
$|\mathcal{N}_\infty (v_n, \lambda
_{v_n}) - \mathcal{N}_\infty (u, \lambda _u)|$.

\begin{equation}
\label{L3-1}
\begin{aligned}
\|{v_n}-u\|_{\infty} &= \| L(\lambda_{v_n} f({v_n})) - L(\lambda_u f(u))\|_{\infty} \\
    &=  \|\lambda_{v_n}^{\frac{1}{p-1}} L(f({v_n})) - \lambda_u^{\frac{1}{p-1}} L(f(u))\|_{\infty} \\
    &\le \lambda_{v_n}^{\frac{1}{p-1}} \| L(f({v_n})) - L(f(u))\|_{\infty} + |\lambda_{v_n}^{\frac{1}{p-1}}- \lambda_u^{\frac{1}{p-1}}|\ \| L(f(u))\|_{\infty}.
\end{aligned}
\end{equation}
Let us define
\begin{equation}
\label{L3-2}
u^* = L(f(u)) \ \text{and} \  {v_n}^* = L(f({v_n})).
\end{equation}
To estimate $\| L(f({v_n})) - L(f(u))\|_{\infty}$ we will need an
interpolation type inequality between $C^{1,0}(\overline{\Omega } ),
C^{1,\beta}(\overline{\Omega}),$ and $W^{1, p} (\Omega)$. We will
use the following interpolation theorem due to A. Le (\cite{Le}).

\begin{theorem}
\label{interp}  There exist constants $c>0$ and $0< \theta < 1$ such
that for any $u\in C^{1,\beta}(\overline{\Omega}) \cap W^{1, p}
(\Omega)$,
\begin{equation}
\|u\|_{1,0} \le c\ \|u\|_{C^{1,\beta}}^{1- \theta} \|u\|_{W^{1, p}}^{\theta}.
\end{equation}
\end{theorem}
Since $u{^*}, {v_n}{^*}\in C^{1,\beta}(\overline{\Omega}) \cap
W_0^{1, p} (\Omega)$, by using the previous theorem and Poincarè
inequality we see that

\begin{equation}
\label{L3-3}
\begin{aligned}
\| L(f({v_n})) - L(f(u))\|_{\infty} &\le  \| L(f({v_n})) - L(f(u))\|_{1,0} \\
&= \|{v_n}{^*} - u{^*}\|_{1,0} \\
    &\le  c\|{v_n}{^*} - u{^*}\|_{C^{1,\beta}}^{1-\theta} \|{v_n}{^*} - u{^*}\|_{W_0^{1, p}}^{\theta}.
\end{aligned}
\end{equation}

We claim that there exists $C>0$ such that
\begin{equation}
\label{L3-4}
\|{v_n}{^*} - u{^*}\|_{C^{1,\beta}}^{1-\theta} \le C.
\end{equation}
To prove (\ref{L3-4}) we first show that there exists $M_1>0$ such that $u, v_n\in B_{M_1}^{\infty}$, the ball with radius $M_1$ in $L^{\infty}$. Since ${v_n}\to u$ in $W_0^{1, p}$, $\|{v_n}\|_{W_0^{1, p}}$, $\|{v_n}\|_{L^{p_0}}$, $\|u\|_{W_0^{1, p}}$, and $\|u\|_{L^{p_0}}$ are bounded by a constant. From Lemma \ref{L2} we have
\begin{equation}
\label{L3-5}
\|u\|_{L^{\infty}} \le  K \  \|u\|_{W_0^{1, p}} \quad \text{and} \quad \|{v_n}\|_{L^{\infty}} \le K \   \|{v_n}\|_{W_0^{1, p}},
\end{equation}
where $K$ denotes a positive constant. Thus, there exists $M_1>0$ such that
\begin{equation}
\label{L3-6}
u, {v_n}\in B_{M_1}^{\infty}.
\end{equation}
Combining (\ref{L3-6}),
\begin{equation}
\label{L3-7} \|f(u)\|_{L^{\infty}} \le C_f
\|u\|_{L^{\infty}}^{p-1},\quad \text{and}\quad
\|f({v_n})\|_{L^{\infty}} \le C_f \|{v_n}\|_{L^{\infty}}^{p-1},
\end{equation}
we see that there exists $M_2>0$ such that
\begin{equation}
\label{L3-8}
\|f(u)\|_{L^{\infty}} \le M_2\quad \text{and}\quad
\|f({v_n})\|_{L^{\infty}} \le M_2.
\end{equation}
As we mentioned above, from the regularity results the inverse of the p-Laplace operator
\begin{equation}
\label{L3-9}
L:= (-\Delta_p)^{-1} : L^{\infty} (\Omega )
\longrightarrow  C^{1,\beta}(\overline{\Omega } )
\end{equation}
is a continuous and compact mapping. An immediate consequence of (\ref{L3-8}) and (\ref{L3-9}) is that there exists $M>0$ such that
\begin{equation}
\label{L3-10}
\|u^*\|_{C^{1,\beta}} \le M\quad \text{and}\quad
\|{v_n}{^*}\|_{C^{1,\beta}} \le M.
\end{equation}
Now (\ref{L3-10}) implies that there exists $C>0$ such that
\begin{equation}
\label{L3-11}
\|{v_n}{^*} - u{^*}\|_{C^{1,\beta}}^{1-\theta} \le C,
\end{equation}
which proves (\ref{L3-4}). From (\ref{L3-3}), (\ref{L3-4}), and
(\ref{L3-11}) we see that
\begin{equation}
\label{L3-12}
%\begin{aligned}
\| L(f({v_n})) - L(f(u))\|_{\infty} \le  C\  \|{v_n}{^*} -
u{^*}\|_{W_0^{1, p}}^{\theta}.
%\end{aligned}
\end{equation}

Since
\begin{equation}
{v_n}{^*} =\frac{v_n}{{\lambda_{v_n}}^{\frac{1}{p-1}}} \quad \text{and}  \quad u^* =\frac{u}{\lambda_u^{\frac{1}{p-1}}}
\end{equation}
it follows that
\begin{equation}
\label{L3-13}
\begin{aligned}
\| L(f({v_n})) - L(f(u))\|_{\infty} &\le  C\  \| {v_n} {{\lambda_{{v_n}}}^{-\frac{1}{p-1}}} - {u} {\lambda_u^{-\frac{1}{p-1}}}\|_{W_0^{1, p}}^{\theta} \\
& = \frac{C}{{\lambda_u^{\frac{1}{p-1}}}{{\lambda_{v_n}^{\frac{1}{p-1}}}}}
 \| v_n \lambda_u^{\frac{1}{p-1}} - u {\lambda_{{v_n}}^{\frac{1}{p-1}}} \|_{W_0^{1, p}}^{\theta} \\
& = \frac{C}{{\lambda_u^{\frac{1}{p-1}}}{{\lambda_{{v_n}}^{\frac{1}{p-1}}}}}
 \| v_n (\lambda_u^{\frac{1}{p-1}} - {\lambda_{{v_n}}^{\frac{1}{p-1}}}) + {\lambda_{{v_n}}^{\frac{1}{p-1}}} (v_n - u) \|_{W_0^{1, p}}^{\theta} \\
& \le \frac{C}{{\lambda_u^{\frac{1}{p-1}}}{{\lambda_{v_n}}^{\frac{1}{p-1}}}}
 \| v_n\|_{W_0^{1, p}} |\lambda_u^{\frac{1}{p-1}} - {{\lambda_{{v_n}}^{\frac{1}{p-1}}}| + |{\lambda_{{v_n}}}|^{\frac{1}{p-1}}}
 \|v_n - u \|_{W_0^{1, p}}^{\theta}. \\
\end{aligned}
\end{equation}
Because the sequences $\{\|{v_n}\|_{W_0^{1, p}}\}$ and $\{\lambda_{{v_n}}\}$ are bounded, there exists $C_1$ such that
\begin{equation}
\label{L3-14} \| L(f({v_n})) - L(f(u))\|_{\infty} \le  C_1
|\lambda_u^{\frac{1}{p-1}} - {\lambda_{{v_n}}^{\frac{1}{p-1}}}| +
\|v_n - u \|_{W_0^{1, p}}^{\theta}.
\end{equation}

From (\ref{L3-1}), (\ref{L3-14}), ${\lambda_{{v_n}}}\to \lambda_u$, and ${v_n}\to u$ in $W_0^{1, p}$ it follows that
\begin{equation}
|\mathcal{N}_\infty (v_n, \lambda
_{v_n}) - \mathcal{N}_\infty (u, \lambda _u)| \longrightarrow 0,
\end{equation}

which proves the lemma.
\end{proof}

\section{Proof Theorem A}

Let $f$ be a function satisfying the hypotheses ($f_1$) - ($f_4$).
Because of regularity theory (see \cite{Di} and \cite{Li}), the
problem of finding  solutions $u\in C^{1,\beta}(\overline{\Omega })$
to \eqref{PL2} is equivalent to find elements $u\in W_0^{1,p}
(\Omega )$ such that
\begin{equation}
\label{laplacian-inverse}
u=  (-\Delta_p)^{-1} (\lambda f(u)).
\end{equation}
We will prove that there are nontrivial solutions of
\eqref{laplacian-inverse} when $\lambda =1$, i.e. four nontrivial
solutions of \eqref{PL}.

\vskip 8pt

Let $f^{+}:\mathbb{R} \to \mathbb{R}$ be defined as $f^{+} (t)=f(t)$
for $t\geq 0$, and $f^{+} (t)=0$ for $t<0$. Similarly, let
$f^{-}:\mathbb{R} \to \mathbb{R}$ be defined as $f^{-} (t)=f(t)$ for
$t\leq 0$, and $f^{-} (t)=0$ for $t>0$. We observe that $f$ can be
written as $$f(t)=f_p'(\infty )|t|^{p-2}t+g(t),$$ where
$g(t)/|t|^{p-2}t \longrightarrow 0$ as $|t|\rightarrow \infty $, and
also
$$f(t)=f_p'(0)|t|^{p-2}t+\widehat{g}(t),$$
where $\widehat{g}(t)/|t|^{p-2}t \longrightarrow 0$ as $t\rightarrow
0$. From Vázquez maximum principle (see \cite{V}), we have the
following lemma.

\begin{lemma}
\label{u-sln-f-mas-tau} If $u\in W_0^{1,p} (\Omega )\setminus \{0
\}$ satisfies $u=(-\Delta_p)^{-1} (\lambda f^{+}(u) +\tau )$, where
$\lambda >0$ and $\tau \geq 0$, then $u\in
C^{1,\beta}(\overline{\Omega})$ for some $\beta \in (0,1)$, $u>0$ on
$\Omega $ and $\frac{\partial u }{\partial \overrightarrow{n}} <0$
 (where $\overrightarrow{n}$ denotes the outer unit normal on
$\partial \Omega$).
\end{lemma}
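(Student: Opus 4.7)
The plan is to prove the result in three stages. First, I would upgrade the regularity of $u$ from $W_0^{1,p}$ to $C^{1,\beta}(\overline{\Omega})$ using the results already stated in the paper. Then I would establish $u \geq 0$ via the standard test-function trick with $u^- = \max(-u,0)$. Finally, with $u \geq 0$ and $u \not\equiv 0$ in hand, I would invoke V\'azquez's strong maximum principle from \cite{V}.

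For the regularity step, the fixed-point identity $u = (-\Delta_p)^{-1}(\lambda f^+(u) + \tau)$ means that $u$ solves $-\Delta_p u = \lambda f^+(u) + \tau$ in the weak sense. Hypothesis $(f_1)$ together with $f(0)=0$ gives $|f^+(u)| \leq C_f |u|^{p-1}$, so the right-hand side is bounded by $\lambda C_f |u|^{p-1} + \tau$. Exactly as in the derivation of \eqref{11} and \eqref{12}, the Moser-type bootstrap of Theorem 6.2.6 of \cite{GP} yields $u \in L^{\infty}(\Omega)$; consequently $\Delta_p u \in L^{\infty}$, and Lemma \ref{L1} gives $u \in C^{1,\beta}(\overline{\Omega})$ for some $\beta \in (0,1)$.

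To show $u \geq 0$, I would test the weak formulation with $v = u^- \in W_0^{1,p}$. Since $\nabla u^- = -\nabla u\, \chi_{\{u<0\}}$ almost everywhere, the left-hand side becomes $-\int_{\{u<0\}} |\nabla u|^p \, dx \leq 0$. On $\{u<0\}$ we have $f^+(u)=0$, so the right-hand side reduces to $\tau \int_{\{u<0\}} u^- \, dx \geq 0$. Both integrals must therefore vanish, forcing $\nabla u^- \equiv 0$ in $\Omega$; since $u^- \in W_0^{1,p}$, this yields $u^- \equiv 0$, i.e.\ $u \geq 0$ in $\Omega$.

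For the final step, on $\{u \geq 0\}$ one has $f^+(u) = f(u)$, and $(f_1)$ with $f(0)=0$ gives $f(u) \geq -C_f u^{p-1}$; combined with $\tau \geq 0$, this produces $\Delta_p u \leq \lambda C_f\, u^{p-1} =: \xi(u)$ in $\Omega$. The function $\xi$ satisfies the hypotheses of V\'azquez's theorem, as already verified in the proof of Lemma \ref{est-sup-norm2} (in particular $\int_0^1 (s\xi(s))^{-1/p}\, ds = +\infty$). The smoothness of $\partial \Omega$ ensures the interior sphere condition, so \cite{V} yields both $u>0$ in $\Omega$ and $\partial u/\partial \overrightarrow{n} < 0$ on $\partial \Omega$. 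The main subtlety lies in the nonnegativity step: because $f^+$ is merely a truncation and $\tau$ can be strictly positive, one needs to be careful that the $u^-$ test still absorbs the $\tau$ term; the point is that $f^+$ vanishes on $(-\infty,0)$ and $\tau u^- \geq 0$, so the signs in the identity align and both sides must collapse to zero. Once $u \geq 0$ is established, the remaining steps are direct invocations of Lemma \ref{L1} and V\'azquez's principle.
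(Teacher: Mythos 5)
Your proposal is correct and follows exactly the route the paper intends (the paper states the lemma as a consequence of V\'azquez's maximum principle without writing out the details): regularity via the bootstrap and Lemma \ref{L1}, nonnegativity by testing with $u^-$ (where the truncation $f^+$ and the sign of $\tau$ make both sides of the identity collapse to zero), and then V\'azquez's principle with $\xi(s)=\lambda C_f s^{p-1}$ exactly as verified in the proof of Lemma \ref{est-sup-norm2}. No gaps; this is a faithful and complete filling-in of the omitted argument.
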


\textbf{Remark:} taking $\tau =0$ in the previous lemma, we observe
that if $u \in W_0^{1,p}(\Omega )$ is a solution of
\begin{equation}
\label{laplacian-inverse-truncation-zero-+} u= (-\Delta_p)^{-1}
(\lambda f^{+}(u))
\end{equation}
and $\lambda >0$, then $u>0$ on $\Omega$. Thus $u$ satisfies
\eqref{laplacian-inverse}, i.e. $(u,\lambda )\in S$. In a similar
way, if $u \in W_0^{1,p}(\Omega ) $ is a nontrivial solution of
\begin{equation}
\label{laplacian-inverse-truncation-zero--} u= (-\Delta_p)^{-1}
(\lambda f^{-}(u))
\end{equation}
and $\lambda >0$, then $u<0$ on $\Omega$. Thus $u$ satisfies
\eqref{laplacian-inverse}, i.e. $(u,\lambda )\in S$.

%\begin{proof}
%From the classical regularity results for the $p$-Laplacian problems
%(see \cite{Di}, \cite{Li} and, for instance, \cite{GP}, Section 6.2%%Lemma 6.2.4 and theorems 6.2.6, 6.2.7
%) $u\in C^{1,\beta}(\overline{\Omega})$. We claim $u\geq 0$ on
%$\Omega$: if $u<0$ on a subdomain $D\subset \Omega$, because of the
%definition of $f^{+}$, $u= (-\Delta_p)^{-1} (\tau )$ on $D$. Since
%the p-Laplacian operator satisfies the maximum principle (see e.g.
%\cite{GP}, Section 6.4) $u\geq 0$ on $D$. This contradicts our
%assumption, and so $u\geq 0$ on $\Omega$.\\

%In order to conclude the proof, we apply again the nonlinear version
%of maximum principle due to J.L. Vásquez (see \cite{V}): let $\xi:
%[0,\infty ) \to \R$ be defined as $\xi(s) = \lambda C_f s^{p-1}.$ We
%see that $\xi$ is a continuous, increasing function, such that
%$\xi(0)=0$ and
%$$ \int_0^1 \frac{1}{(s\ \xi(s))^{\frac{1}{p}}} ds = +\infty.$$
%Moreover, $\Delta _p u \leq \xi (u)$ on $\Omega$. The result follows
%from \cite{V}.
%\end{proof}

We define
$$S^{+}=\{ (u,\lambda )\in W_0^{1,p} (\Omega )\times \mathbb{R}: \, \,
u\neq 0 \, \, \text{and} \, \, u= (-\Delta_p)^{-1} ( \lambda
f^{+}(u)) \}$$ and
$$S^{-}=\{ (u,\lambda )\in W_0^{1,p} (\Omega )\times \mathbb{R}:
\, \, u\neq 0 \, \, \text{and} \, \,  u= (-\Delta_p)^{-1} (\lambda
f^{-}(u)) \}.$$

As we mentioned above, we use bifurcation theory (see \cite{Ra},
\cite{R}, \cite{R2} and \cite{AM}) to prove Theorem A. Let us recall
that, in our framework, $(0,\lambda ^* )$ is a \emph{bifurcation
point from zero} for equation $u= (-\Delta_p)^{-1} ( \lambda
f^{+}(u))$ if $(0,\lambda ^* ) \in \overline{S^{+}}$ or,
equivalently, if there exists a sequence $\{ (u_n ,\lambda _n) \}
_n$ in $S^{+}$ which converges to $(0,\lambda
 ^{*} )$. Also, $(\infty,\lambda ^* )$ or simply $\lambda ^{*}$ is a \emph{bifurcation
point from infinity} for equation $u= (-\Delta_p)^{-1} ( \lambda
f^{+}(u))$ if  there exists a sequence $\{ (u_n ,\lambda _n) \} _n$
in $S^{+}$ such that $\lambda _n \longrightarrow \lambda ^{*}$ and
$\|{u_n}\|_{W_0^{1, p}} \longrightarrow \infty$ as $n\rightarrow
\infty$. Similar definitions apply for equation $u= (-\Delta_p)^{-1}
( \lambda f^{-}(u))$.

\vskip 8pt

First we present an argument using bifurcation from infinity to show
the existence of two one-sign solutions of (\ref{PL}). Secondly, we
use bifurcation from zero to show the existence of two additional
one-sign solutions. At the end of this section we include a
bifurcation diagram which summarizes the arguments presented below.

\subsection{Bifurcation from infinity}

Let us define $\Psi _{+} : W_0^{1,p}(\Omega ) \times \mathbb{R}
\longrightarrow W_0^{1,p}(\Omega )$ by
\begin{equation*}
  \begin{aligned} \Psi _{+}(z,\lambda ) =\begin{cases}
z-  \| z \|^{2} (-\Delta_p)^{-1} \left[\lambda f^{+}\left(
\frac{z}{\| z \|^{2} } \right) \right]  \quad \text{if }
& z\neq 0,\\
\hspace*{3cm}  0\phantom{aaaaaaaaaaaaa}  \text{if} &z=0 ,
\end{cases}
\end{aligned}
\end{equation*}
and $\Psi _{-}$ in the same way, changing $f^{+}$ by $f^{-}$. The
following result will be used to prove the existence of two one-sign
solutions for problem \eqref{PL}.

\begin{theorem}
\label{AM-lemma-around-infty}  $(\infty, \lambda _1 /f_p'(\infty))$ is the unique bifurcation point from infinity for equation \eqref{laplacian-inverse-truncation-zero-+}. More precisely, there exists a connected component $\Sigma
_{\infty}^{+} $ of $S^{+}$ bifurcating from $(\infty ,\lambda _1/f_p'(\infty) )$ which corresponds to an unbounded connected
component $\Gamma _{\infty }^{+}$ of
$$\Gamma ^{+} =\{ (z,\lambda )\in W_0^{1,p} (\Omega )\times \mathbb{R}: \, \,
z\neq 0 \, \, \text{and} \, \, \Psi_{+} (z,\lambda)=0 \}, $$
emanating from the trivial solution of $\Psi_{+} (z,\lambda)=0$ at
$(0,\lambda _1 /f_p'(\infty ) )$. Analogously, the point $(\infty, \lambda _1/f_p'(\infty ))$ is the unique  bifurcation from infinity
for equation \eqref{laplacian-inverse-truncation-zero--}. More precisely, there exists a connected component $\Sigma
_{\infty}^{-} $ of $S^{-}$ bifurcating from $(\infty ,\lambda _1
/f_p'(\infty) )$ which corresponds to an unbounded connected
component $\Gamma _{\infty }^{-}$ of
$$\Gamma ^{-} =\{ (z,\lambda )\in W_0^{1,p} (\Omega )\times \mathbb{R}: \, \,
z\neq 0 \, \, \text{and} \, \, \Psi_{-} (z,\lambda)=0 \}, $$
emanating from the trivial solution of $\Psi_{-} (z,\lambda)=0$ at
$(0,\lambda _1 /f_p'(\infty ) )$.
\end{theorem}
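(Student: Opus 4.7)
The plan is to follow the Ambrosetti--Hess strategy (see \cite{AH} and Section 4.4 of \cite{AM}) of transforming bifurcation from infinity for $u=(-\Delta_p)^{-1}(\lambda f^{+}(u))$ into bifurcation from zero for $\Psi_+(z,\lambda)=0$ through the inversion $u=z/\|z\|^{2}$. A direct verification shows that if $z\in W_0^{1,p}(\Omega)\setminus\{0\}$ satisfies $\Psi_+(z,\lambda)=0$, then $u:=z/\|z\|^{2}$ satisfies $u=(-\Delta_p)^{-1}(\lambda f^{+}(u))$, and conversely, with $\|u\|_{W_0^{1,p}}=1/\|z\|_{W_0^{1,p}}$. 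Hence unbounded branches $\Gamma_\infty^{+}$ of $\Psi_+=0$ emanating from $(0,\lambda_1/f'_p(\infty))$ are in bijection with unbounded branches $\Sigma_\infty^{+}\subset S^{+}$ bifurcating from $(\infty,\lambda_1/f'_p(\infty))$, so the two claims in the theorem become logically equivalent.

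Next I would analyze $\Psi_+$ near $z=0$. Writing $f(t)=f'_p(\infty)|t|^{p-2}t+g(t)$ with $g(t)/|t|^{p-2}t\to 0$ as $|t|\to\infty$, and exploiting the $1/(p-1)$--homogeneity of $(-\Delta_p)^{-1}$, one computes
\begin{equation*}
\|z\|^{2}(-\Delta_p)^{-1}\bigl[\lambda f'_p(\infty)(z^{+}/\|z\|^{2})^{p-1}\bigr]=(\lambda f'_p(\infty))^{1/(p-1)}(-\Delta_p)^{-1}\bigl[(z^{+})^{p-1}\bigr],
\end{equation*}
which is positively $1$--homogeneous in $z$. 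Collecting terms,
\begin{equation*}
\Psi_+(z,\lambda)=z-(\lambda f'_p(\infty))^{1/(p-1)}(-\Delta_p)^{-1}\bigl[(z^{+})^{p-1}\bigr]+R(z,\lambda),
\end{equation*}
where $R$ carries the contribution of $g^{+}$. The key estimate is $\|R(z,\lambda)\|_{W_0^{1,p}}=o(\|z\|_{W_0^{1,p}})$ as $z\to 0$, uniformly for $\lambda$ in bounded sets; this follows from the pointwise decay of $g(t)/|t|^{p-1}$ at infinity combined with Lemma \ref{L1}, Lemma \ref{L2}, and the compactness of the inverse $p$-Laplacian.

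With this expansion in hand, I would invoke the quasilinear global bifurcation theorem of Del Pino and Man\'asevich (\cite{DM}), which extends Rabinowitz's alternative (see \cite{R2} and \cite{AM}) to operators whose principal part is a positively $1$--homogeneous compact perturbation of the identity. The characteristic values are precisely those $\lambda$ for which
\begin{equation*}
-\Delta_p z=\lambda f'_p(\infty)(z^{+})^{p-1}\ \text{ in }\Omega,\qquad z=0\ \text{ on }\partial\Omega,
\end{equation*}
admits a nontrivial solution. All solutions on $\Gamma^{+}$ are positive on $\Omega$ by the V\'azquez principle (arguing as in Lemma \ref{u-sln-f-mas-tau}), and $\lambda_1(p)$ is the only eigenvalue of $-\Delta_p$ admitting a positive eigenfunction; hence the unique characteristic value is $\lambda=\lambda_1/f'_p(\infty)$. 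The Rabinowitz--type alternative then produces an unbounded connected component $\Gamma_\infty^{+}$ of $\Gamma^{+}$ emanating from $(0,\lambda_1/f'_p(\infty))$, and the inversion $u=z/\|z\|^{2}$ transports it to the desired $\Sigma_\infty^{+}$. Uniqueness of the bifurcation point from infinity is inherited from the uniqueness of the characteristic value. The argument for $\Psi_{-}$ is entirely parallel, with $z^{+}$ replaced by $z^{-}$.

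The main obstacle I anticipate is the expansion in the second paragraph: since $(-\Delta_p)^{-1}$ is only $1/(p-1)$--homogeneous rather than linear, the classical Ambrosetti--Hess computation cannot be copied verbatim and must be reorganized around the positively $1$--homogeneous operator $z\mapsto(-\Delta_p)^{-1}[(z^{+})^{p-1}]$. Controlling $R(z,\lambda)$ in the $W_0^{1,p}$ norm (rather than merely pointwise) is where the $L^{\infty}$--$W_0^{1,p}$ equivalence of Lemma \ref{L2} and the $C^{1,\beta}$ regularity of Lemma \ref{L1} become decisive, explaining the role of the preliminary results of Section 2.
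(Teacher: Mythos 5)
Your reduction via the inversion $u=z/\|z\|^{2}$ and the decomposition $f(t)=f_p'(\infty)|t|^{p-2}t+g(t)$ are exactly the paper's starting point, and your observation that uniqueness of the bifurcation point follows from the fact that $\lambda_1(p)$ is the only eigenvalue with a positive eigenfunction matches Lemma \ref{4.15 AM}(b). But there is a genuine gap at the heart of the argument: you never establish the jump in the Leray--Schauder index of $\Psi_{+}(\cdot,\lambda)$ at $z=0$ as $\lambda$ crosses $\lambda_\infty=\lambda_1/f_p'(\infty)$, and without that jump no Rabinowitz-type alternative produces the unbounded component. The theorem of Del Pino and Man\'asevich that you invoke concerns the \emph{odd} homogeneous principal part $|u|^{p-2}u$ (and bifurcation from zero for the full nonlinearity); it does not cover the positively $1$-homogeneous but non-odd operator $z\mapsto(-\Delta_p)^{-1}[(z^{+})^{p-1}]$ arising from the truncation $f^{+}$, so "the unique characteristic value is $\lambda_1/f_p'(\infty)$" gives you only the necessity direction (where bifurcation \emph{can} occur), not the existence of a branch. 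The paper supplies the missing degree computation in two steps: Lemma \ref{4.15 AM}(c) shows $i(\Psi_{+}(\cdot,\lambda),0)=1$ for $\lambda<\lambda_\infty$ by a homotopy to the identity, and Lemma \ref{4.16 AM} shows $i(\Psi_{+}(\cdot,\lambda),0)=0$ for $\lambda>\lambda_\infty$ by the Ambrosetti--Hess device of adding a forcing term: one proves an a priori bound for $u=(-\Delta_p)^{-1}(\lambda f^{+}(u)+\tau)$ \emph{uniformly in} $\tau\geq 0$ (this requires a sub/supersolution argument with $t\phi_1$ and a truncation--penalization step to rule out the cases $\tau_n/\|u_n\|^{p-1}\to c>0$ and $\to\infty$), and then uses the homotopy $H(t,z)=z-(-\Delta_p)^{-1}(\lambda\|z\|^{2(p-1)}f^{+}(z/\|z\|^{2})+t)$, which has no zeros for $t=1$. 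None of this appears in your proposal, and it is the essential content of the proof.

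A secondary, but also real, problem is your claimed expansion $\Psi_{+}(z,\lambda)=z-(\lambda f_p'(\infty))^{1/(p-1)}(-\Delta_p)^{-1}[(z^{+})^{p-1}]+R(z,\lambda)$ with $\|R\|_{W_0^{1,p}}=o(\|z\|)$. Since $(-\Delta_p)^{-1}$ is not additive, you cannot split $(-\Delta_p)^{-1}(\lambda f_p'(\infty)(u^{+})^{p-1}+\lambda g^{+}(u))$ into a principal term plus a remainder without a quantitative continuity estimate for $(-\Delta_p)^{-1}$ with respect to its argument; you flag this as "the main obstacle" but do not resolve it, and your proof depends on it. The paper sidesteps this entirely: in the a priori estimates it never splits the inverse operator, but instead divides the \emph{equation} by $\|u_n\|^{p-1}_{W_0^{1,p}}$, extracts weak limits of $u_n^{p-1}/\|u_n\|^{p-1}$ and $g(u_n)/\|u_n\|^{p-1}$ in $L^{p'}$, and passes to the limit using compactness of $(-\Delta_p)^{-1}:L^{p'}\to W_0^{1,p}$, together with the energy identity \eqref{23} to show the weak limit $\overline{v}$ is nonzero. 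I recommend you restructure your argument around these two index computations and the compactness scheme \eqref{16}--\eqref{23} rather than around a formal expansion of $\Psi_{+}$.
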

 \textbf{Remark:} As we mentioned in the introduction above, Theorem \ref{AM-lemma-around-infty} is inspired by a corresponding
 result in the semilinear case due to Ambrosetti and Hess
 (see \cite{AH} and Section 4.4 in \cite{AM}), and by Theorem 4.1 in \cite{AGP} (see also
 \cite{DM}). The proof we
 present below closely follows the ideas from \cite{AH}, \cite{AM} and \cite{AGP}, but
 our arguments have several differences with respect to these
 references. First, as expected, a lot of technicalities arise when
 trying to adapt the $\Delta -$approach from \cite{AH}
 and \cite{AM} to the $\Delta _p$ nonlinear operator. Second, our
 hypotheses on $f$ slightly differ from those in Theorem 4.1 of \cite{AGP} (ours are a little less restrictive near
 infinity) and, in the proof presented in \cite{AGP}, several details are omited. And third, our
 choice of functional spaces is different from both references. For the sake of completeness we include full details here.

 In
 order to prove Theorem \ref{AM-lemma-around-infty} we need the
 following lemmas.

\begin{lemma}
\label{4.15 AM} Let $J\subset \mathbb{R}^+$ be a compact interval
such that $\lambda _{\infty } :=\lambda _1 /f_p'(\infty) \notin J$.
Then
\begin{itemize}
\item[a)] There exists $r>0$ such that  $u\neq (-\Delta_p)^{-1}
(\lambda f^{+}(u))$ for every $\lambda \in J$ and every $u\in
W_0^{1,p}(\Omega )$ with $\| u\| _{W_0^{1,p}} \geq r$.
\item[b)] $(\infty, \lambda _1 /f_p'(\infty))$ is the unique bifurcation point from infinity for equation
\eqref{laplacian-inverse-truncation-zero-+}.
\item[c)] $i(\Psi _{+}(\cdot,\lambda ),0)=1$ for every $\lambda <\lambda _{\infty
}$ (here, $i(\Psi _{+}(\cdot,\lambda ),0)$ denotes the index of $\Psi
_{+}(\cdot,\lambda )$ with respect to zero).
\end{itemize}
\end{lemma}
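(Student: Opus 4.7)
The plan is to prove (a) by a blow-up/contradiction argument that produces a nonnegative eigenfunction of $-\Delta_p$, and then to obtain (b) as an immediate corollary and (c) by a homotopy-invariance calculation of the Leray--Schauder index along $\mu\in[0,\lambda]$.

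For (a), I would assume for contradiction that there exist $\lambda_n\in J$ and $u_n\in W_0^{1,p}(\Omega)$ with $\|u_n\|_{W_0^{1,p}}\to\infty$ and $u_n=(-\Delta_p)^{-1}(\lambda_n f^{+}(u_n))$. By Lemma \ref{u-sln-f-mas-tau}, $u_n>0$ in $\Omega$, and along a subsequence $\lambda_n\to\lambda\in J$. Set $w_n:=u_n/\|u_n\|_{W_0^{1,p}}$; then $\|w_n\|_{W_0^{1,p}}=1$ and, by Lemma \ref{L2}, $\|w_n\|_{L^{\infty}}\le K_2$. Using the $(p-1)$-homogeneity of $-\Delta_p$, $-\Delta_p w_n=h_n$ where
\[
h_n:=\lambda_n\,\frac{f^{+}(u_n)}{\|u_n\|_{W_0^{1,p}}^{p-1}}=\lambda_n\,\frac{f^{+}(u_n)}{u_n^{p-1}}\,w_n^{p-1},
\]
and $(f_1)$ (applied with $s=0$) gives $|h_n|\le \lambda_n C_f\,w_n^{p-1}\le \lambda_n C_f K_2^{p-1}$, a uniform $L^{\infty}$ bound. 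The compactness of $(-\Delta_p)^{-1}:L^{\infty}\to C^{1,\beta}(\overline{\Omega})$ then yields, along a subsequence, $w_n\to w$ in $C^{1}(\overline{\Omega})$, with $w\ge 0$ and $\|w\|_{W_0^{1,p}}=1$. Since $u_n=\|u_n\|_{W_0^{1,p}}w_n\to +\infty$ pointwise on $\{w>0\}$, $(f_3)$ forces $h_n(x)\to\lambda f_p'(\infty)w(x)^{p-1}$ there, while on $\{w=0\}$ one has $|h_n|\le\lambda_n C_f w_n^{p-1}\to 0$. Dominated convergence gives $h_n\to\lambda f_p'(\infty)w^{p-1}$ in $L^{q}$ for every $q<\infty$, and passing to the limit in the weak formulation of $-\Delta_p w_n=h_n$ shows that $w$ is a nontrivial, nonnegative weak solution of $-\Delta_p w=\lambda f_p'(\infty)w^{p-1}$. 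Since $\lambda_1(p)$ is the only eigenvalue of $-\Delta_p$ admitting a nonnegative eigenfunction, $\lambda f_p'(\infty)=\lambda_1(p)$, i.e.\ $\lambda=\lambda_\infty\in J$, contradicting $\lambda_\infty\notin J$.

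Statement (b) is then immediate: if $\lambda^{*}\ne\lambda_\infty$ were a bifurcation point from infinity for \eqref{laplacian-inverse-truncation-zero-+}, a compact neighborhood $J$ of $\lambda^{*}$ avoiding $\lambda_\infty$ together with the defining sequence would contradict (a). For (c), fix $\lambda<\lambda_\infty$ and apply (a) to the compact interval $J=[0,\lambda]$ to obtain $r>0$ such that no $(\mu,u)$ with $\mu\in J$ and $\|u\|_{W_0^{1,p}}\ge r$ satisfies $u=(-\Delta_p)^{-1}(\mu f^{+}(u))$. Via the substitution $z=u/\|u\|_{W_0^{1,p}}^{2}$ (so $\|z\|=1/\|u\|$), this translates into $\Psi_{+}(z,\mu)\ne 0$ for all $\mu\in[0,\lambda]$ and all $0<\|z\|\le\rho:=1/r$. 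Since $\Psi_{+}(\cdot,\mu)=\mathrm{Id}-K_\mu$ with $K_\mu$ compact (by compactness of $(-\Delta_p)^{-1}$), homotopy invariance of the Leray--Schauder degree on $B_\rho(0)$ along $\mu\in[0,\lambda]$, together with $\Psi_{+}(z,0)=z$, gives
\[
i(\Psi_{+}(\cdot,\lambda),0)=i(\Psi_{+}(\cdot,0),0)=i(\mathrm{Id},0)=1.
\]

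The main obstacle is the passage to the limit in $w_n=(-\Delta_p)^{-1}(h_n)$. The key ingredients are the $L^{\infty}$ bound on $w_n$ from Lemma \ref{L2}, which confines $h_n$ to a bounded subset of $L^{\infty}$; the compactness of $(-\Delta_p)^{-1}:L^{\infty}\to C^{1,\beta}(\overline{\Omega})$, which upgrades the $W_0^{1,p}$-normalization to uniform $C^{1}$ convergence and turns the pointwise blow-up $u_n\to\infty$ on $\{w>0\}$ into effective convergence of the nonlinear quotient $f^{+}(u_n)/u_n^{p-1}\to f_p'(\infty)$; and the sign-characterization of $\lambda_1(p)$, which identifies the limit equation and pins $\lambda$ to $\lambda_\infty$.
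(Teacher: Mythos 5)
Your proof is correct; parts b) and c) coincide with the paper's argument (for b), a compact interval around a putative bifurcation point avoiding $\lambda_\infty$ contradicts a); for c), the homotopy $H(t,z)=\Psi_{+}(z,t\lambda)$ on a small ball, reduced to the identity at $t=0$). In part a), however, you take a genuinely different route to the limiting eigenvalue equation. The paper normalizes $v_n=u_n/\|u_n\|_{W_0^{1,p}}$, extracts a weakly convergent subsequence in $W_0^{1,p}(\Omega)$, writes $f^{+}(t)=f_p'(\infty)|t|^{p-2}t+g(t)$, and verifies by hand the weak $L^{p'}$ convergences $u_n^{p-1}/\|u_n\|^{p-1}\rightharpoonup \overline{v}^{\,p-1}$ and $g(u_n)/\|u_n\|^{p-1}\rightharpoonup 0$ (splitting the integral over $\{|u_n|>M_{\varepsilon}\}$ and its complement); it then uses compactness of $(-\Delta_p)^{-1}:L^{p'}\to W_0^{1,p}$ and proves $\overline{v}\neq 0$ by testing the equation against $v_n$. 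You instead invoke the uniform $L^{\infty}$ bound of Lemma \ref{L2} (which, as in the paper's own later use for elements of $S^{+}$, carries over to the truncated nonlinearity $f^{+}$, with constants uniform over the compact $J$), so that the right-hand sides $h_n$ are bounded in $L^{\infty}$; compactness of $(-\Delta_p)^{-1}:L^{\infty}\to C^{1,\beta}(\overline{\Omega})$ then gives strong $C^{1}$ convergence $w_n\to w$, nontriviality comes for free from $\|w\|_{W_0^{1,p}}=\lim\|w_n\|_{W_0^{1,p}}=1$, and the limit right-hand side is identified by a pointwise/dominated-convergence argument on $\{w>0\}$ and $\{w=0\}$. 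Your route is shorter and makes the nontriviality step cleaner, at the cost of leaning on the stronger regularity machinery ($L^{\infty}$ and $C^{1,\beta}$ estimates), whereas the paper's argument needs only $L^{p'}$-weak compactness. Both proofs ultimately rest on the same fact, namely that $\lambda_1(p)$ is the only eigenvalue of $-\Delta_p$ admitting a nonnegative eigenfunction.
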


\begin{proof}
In order to prove a) we argue by contradiction. Assume there exist a
sequence $\{ \lambda _n \} _n  \subset J$ and a sequence $\{ u_n \}
_n \subset W_0^{1,p} (\Omega )$ such that $\| u_n \| _{W_0^{1,p}}
\longrightarrow +\infty$ and
\begin{equation} \label{15}
u_n =(-\Delta_p)^{-1} (\lambda _n f^{+}(u_n ))\quad \text{for every}
\quad n\in \mathbb{N}.
\end{equation}
Because of Lemma \ref{u-sln-f-mas-tau}, $u_n \geq 0$ for every $n$.
Dividing \eqref{15} by $\| u_n \| _{W_0^{1,p}}$ we get
\begin{equation} \label{16}
\frac{u_n }{ \| u_n \| _{W_0^{1,p}} } =(-\Delta_p)^{-1} \left(
 \frac{ \lambda _n f_p'(\infty )u_n ^{p-1} +\lambda _n g(u_n ) }{\| u_n \|^{p-1} _{W_0^{1,p}} } \right) \quad \text{for every} \quad n\in
\mathbb{N},
\end{equation}
where $g(t)/|t|^{p-2}t \longrightarrow 0$ as $t\rightarrow \infty $.
What follows is a standard compactness argument. Indeed, since $\{
 u_n / \| u_n \| _{W_0^{1,p}} \} _n$ is a bounded sequence in
$W_0^{1,p} (\Omega )$, there exists a subsequence, for which we keep
the same notation, $\overline{v} \in W_0^{1,p} (\Omega )$ and $h\in
L^p (\Omega )$ such that
\begin{equation}\label{seq u-n lema 4.15 }
\begin{cases}
\begin{aligned}
 \frac{u_n }{ \| u_n \| _{_{W_0^{1,p}} }} \rightharpoonup \
&\overline{v} \quad    \text{weakly in}\quad W_0^{1,p} (\Omega ) \\
 \frac{u_n }{ \| u_n \| _{_{W_0^{1,p}} }} \rightarrow \ &\overline{v}
\quad    \text{strongly in}\quad L^p (\Omega ) \\
 \frac{u_n (x) }{ \| u_n \| _{_{W_0^{1,p}} }} \rightarrow \
 &\overline{v} (x)
\quad   \text{a.e. } x\in \Omega  \\
 \frac{u_n (x) }{ \| u_n \| _{_{W_0^{1,p}} }} \leq \ & h(x) \quad   \text{a.e. } x\in
 \Omega.
\end{aligned}
\end{cases}
\end{equation}
Now, let us verify $u^{p-1} _n / \| u_n \|^{p-1} \rightharpoonup
\overline{v}\ ^{p-1} $ and $g(u _n ) / \| u_n \|^{p-1}
\rightharpoonup 0 $ weakly in $L^{p' } (\Omega )$, where $1/p
+1/p^{'} =1$. Let $\omega \in L^{p } (\Omega )$. Then, from
\eqref{seq u-n lema 4.15 },
$$\frac{u^{p-1} _n (x) \omega (x) }{ \| u_n \|^{p-1}} \longrightarrow
\overline{v}\ ^{p-1} (x) \omega (x)\  \text{ a.e. } x\in \Omega
\quad \text{ and }\quad  \frac{u^{p-1} _n  \omega  }{ \| u_n
\|^{p-1}} \leq |h |^{p-1} \omega.$$

Since $h\in L^p (\Omega )$, $|h |^{p-1} \in L^{p'} (\Omega )$.
Hence, dominated convergence theorem implies that
$$\int _{\Omega } \frac{u^{p-1} _n  \omega  }{ \| u_n
\|^{p-1}} \, dx \longrightarrow \int _{\Omega } \overline{v}\ ^{p-1}
\omega \, dx \ \text{ as } n\rightarrow \infty .$$ Since this holds
true for every $\omega \in L^{p } (\Omega )$, Riesz representation
theorem guarantees that $u^{p-1} _n / \| u_n \|^{p-1}
\rightharpoonup \overline{v}\ ^{p-1} $ weakly in $L^{p' } (\Omega
)$. In order to verify $g(u _n ) / \| u_n \|^{p-1} \rightharpoonup 0
$ weakly in $L^{p' } (\Omega )$, we take $\varepsilon >0$ and then,
since $g(t)/|t|^{p-2}t \longrightarrow 0$ as $t\rightarrow \infty $,
there exists $M_{\varepsilon} >0$ such that
\begin{equation} \label{17}
t>M_{\varepsilon} \ \Longrightarrow  \ |g(t)|< \varepsilon t^{p-1}.
\end{equation}
Given $n\in \mathbb{N}$, we observe that
\begin{equation} \label{18}
\int_{\Omega } \frac{g(u_n )}{\| u_n \|^{p-1}} \omega \, dx =
\int_{|u_n | >M_{\varepsilon } } \frac{g(u_n )}{\| u_n \|^{p-1}} \omega
\, dx + \int_{|u_n | \leq M_{\varepsilon } } \frac{g(u_n )}{\| u_n
\|^{p-1}} \omega \, dx.
\end{equation}
Regarding the first integral on the right-hand side of \eqref{18},
from \eqref{17}, Hölder inequality, and the continuity of the
embedding, we get
\begin{equation}\label{19}
\begin{aligned}
\left| \int_{|u_n | >M_{\varepsilon } } \frac{g(u_n )}{\| u_n \|^{p-1}}
\omega \, dx \right| &= \int_{|u_n | >M_{\varepsilon } } \frac{|g(u_n
)|}{u^{p-1}_n } \frac{u^{p-1}_n }{\| u_n \|^{p-1}} |\omega | \, dx
\leq \varepsilon \int_{|u_n | >M_{\varepsilon } } \frac{u^{p-1}_n }{\| u_n
\|^{p-1}} |\omega | \, dx \\
&\leq \varepsilon \| \omega \| _{L^p}  \left\| \frac{u^{p-1}_n }{\| u_n
\|^{p-1}} \right\| _{L^{p'}} \leq C\varepsilon \| \omega \| _{L^p}.
\end{aligned}
\end{equation}
With respect to the second integral on the right-hand side of
\eqref{18}, we have
\begin{equation}\label{20}
\left| \int_{|u_n | \leq M_{\varepsilon } } \frac{g(u_n )}{\| u_n
\|^{p-1}} \omega \, dx \right| = \|g\|_{L^{\infty }[0,M_{\varepsilon
}]}\int_{|u_n | \leq M_{\varepsilon } } \frac{|\omega |}{\| u_n
\|^{p-1}}  \, dx \leq \frac{\|g\|_{L^{\infty }[0,M_{\varepsilon }]}
}{\| u_n \|^{p-1}} \| \omega \|_{L^1 }.
\end{equation}
Since $\varepsilon >0$ is fixed, $\| g \|_{L^{\infty }[0,M_{\varepsilon
}]}$ is fixed. The right-hand side of \eqref{20} tends to zero as
$n\rightarrow \infty$, because $\| u_n \| _{W_0^{1,p}}
\longrightarrow +\infty$. Thus, from \eqref{18}, \eqref{19},
\eqref{20}, and the fact that $\omega \in L^{p} (\Omega )$ is
arbitrary, we conclude $g(u _n ) / \| u_n \|^{p-1} \rightharpoonup 0
$ weakly in $L^{p' } (\Omega )$.

\vskip 8pt

We then have that the argument on the right-hand side in \eqref{16}
converges weakly to $\overline{\lambda} f_p'(\infty ) \overline{v}$
in $L^{p'} (\Omega )$, for some $\overline{\lambda} \in J $. As
$(-\Delta_p)^{-1} : L^{p'} (\Omega ) \longrightarrow W^{1,p} _0
(\overline{\Omega } )$ is compact, from \eqref{16} we get a further
subsequence $\{ \frac{ u_n }{ \| u_n \| _{W_0^{1,p}}} \} _n$ such
that
\begin{equation}\label{21}
\frac{u_n }{ \| u_n \| _{W_0^{1,p}} } =(-\Delta_p)^{-1} \left(
 \frac{ \lambda _n f_p'(\infty )u_n ^{p-1} +\lambda _n g(u_n ) }{\| u_n \|^{p-1} _{W_0^{1,p}} }
 \right) \longrightarrow (-\Delta_p)^{-1} \left(  \overline{\lambda} f_p'(\infty ) \overline{v} \right) \  \ \text{as }\ n\rightarrow \infty
\end{equation}
strongly $W^{1,p} _0 (\overline{\Omega } )$. From \eqref{seq u-n
lema 4.15 } and \eqref{21} we conclude
\begin{equation}\label{21-b}
(-\Delta_p)^{-1} \left(  \overline{\lambda} f_p'(\infty )
\overline{v} \right) =\overline{v}.
\end{equation}
We claim $\overline{v} \neq 0$. Let us denote $v_n :=u_n / \| u_n \|
_{W_0^{1,p}} $ for each $n$. From \eqref{16}, we have that
\begin{equation}\label{22}
 -\Delta_p v_n=
  \lambda _n f_p'(\infty )v_n ^{p-1} +\frac{\lambda _n g(u_n ) }{\| u_n \|^{p-1} _{W_0^{1,p}} }  \quad \text{for every} \quad n\in
\mathbb{N},
\end{equation}
in the weak sense. Multiplying \eqref{22} by $v_n$ and integrating
we get
\begin{equation}\label{23}
1=\| v_n \|^{p} _{W_0^{1,p}}=
  \lambda _n f_p'(\infty ) \| v_n \|^{p} _{L^p} +\int _{\Omega } \frac{\lambda _n g(u_n ) }{\| u_n \|^{p-1} _{W_0^{1,p}} } v_n \, dx  \quad \text{for every} \quad n\in
\mathbb{N}.
\end{equation}
By virtue of \eqref{seq u-n lema 4.15 } and the compactness of
$J\subset \mathbb{R}^+$, the first term on the right-hand-side in
\eqref{23} tends to $\overline{\lambda} f_p'(\infty ) \|
\overline{v} \|^{p} _{L^p}$. Arguing as above, when we proved $g(u
_n ) / \| u_n \|^{p-1} \rightharpoonup 0 $ weakly in $L^{p' }
(\Omega )$, one can show
$$\int _{\Omega } \frac{\lambda _n g(u_n ) }{\| u_n \|^{p-1} _{W_0^{1,p}} } v_n \, dx \longrightarrow 0 \ \text{as}\ n\rightarrow \infty $$
(double checking \eqref{19} and \eqref{20}, replacing $\omega$ by
$v_n$, one can observe that the same argument can be carried out
provided that sequence $\{ \| v_n \|^{p} _{L^p} \} _n$ be bounded,
which is our case). Thus, taking limit in \eqref{23}, we conclude
$1=\overline{\lambda} f_p'(\infty ) \| \overline{v} \|^{p} _{L^p}$
and so $\overline{v} \neq 0$.

\vskip 6pt

Therefore \eqref{21-b} means $\overline{\lambda} f_p'(\infty )$ is
an eigenvalue of $-\Delta_p $ and $\overline{v}$ is an associated
eigenfunction. This is absurd since $\overline{v}\geq 0$ (from
\eqref{seq u-n lema 4.15 }) and $\overline{\lambda} f_p'(\infty
)\neq \lambda _1$ (since $\overline{\lambda} \in J$ and $\lambda _1
/f_p'(\infty) \notin J$). This contradiction completes our proof of
a).

\vskip 8pt

To prove b), again we argue by contradiction. Assume there is a
bifurcation point $\overline{\lambda}$ from $\infty$ such that
$\overline{\lambda} \neq \lambda _{\infty }$. Let $J \subset
\mathbb{R}^+$ be a compact interval such that $\overline{\lambda}
\in J$ and $\lambda _{\infty } \notin J$. Then, there exists a
sequence $\{ (u_n ,\lambda _n )\} \subset S^{+}$ such that $\| u_n
\| _{W_0^{1,p}} \longrightarrow +\infty$ and  $\lambda _n \in J$ for
large $n\in \mathbb{N}$. But this contradicts a).

\vskip 6pt

We now prove c). Let $\lambda <\lambda _{\infty }$. Consider
$J=[0,\lambda ]$. For every $t\in [0,1]$ we have $t\lambda \in J$.
From a) it follows that
$$u- (-\Delta_p)^{-1} (t\lambda f^{+}(u))\neq 0$$
for every $\lambda \in J$ and every $u\in W_0^{1,p}(\Omega )$ with
$\| u\| _{W_0^{1,p}} \geq r$. For such an $u$, taking $z= u/\| u\|^2
_{_{W_0^{1,p}}}$, we get
$$z- \| z\|^2 (-\Delta_p)^{-1} (t\lambda f^{+}\left( z/\| z\|^2 \right) )\neq 0 $$
for every $z\in W_0^{1,p}(\Omega )$ such that $\| z\| _{W_0^{1,p}}
\leq 1/r$. Hence, $\Psi _{+}( z,t\lambda ) \neq 0$ for every $z\in
W_0^{1,p}(\Omega )$ such that $0< \| z\| _{W_0^{1,p}} \leq 1/r$. Let
us define the homotopy $H: [0,1]\times W_0^{1,p}(\Omega )
\longrightarrow W_0^{1,p}(\Omega )$ by $H(t,u)= \Psi _{+}(
u,t\lambda )$. Using Leray-Schauder degree invariance under
homotopies, we get
$$\deg(H(1,\cdot), B_{1/r} (0),0)=deg(H(0,\cdot), B_{1/r} (0),0)$$
equivalently
$$\deg(\Psi _{+}(
\cdot,\lambda ), B_{1/r} (0),0)=deg(I, B_{1/r} (0),0)=1.$$

\end{proof}

\begin{lemma}
\label{4.16 AM} The following assertions hold true:
\begin{itemize}
\item[a)] Let $\lambda > \lambda _{\infty } :=\lambda _1
/f_p'(\infty)$. Then there exists $R>0$ such that $u\neq
(-\Delta_p)^{-1} (\lambda f^{+}(u) +\tau )$ for every $\tau \geq 0$
and every positive $u\in W_0^{1,p}(\Omega )$ such that $\| u\|
_{W_0^{1,p}} \geq R$.
\item[b)] $i(\Psi _{+}(
\cdot,\lambda ),0)=0$ for all $\lambda > \lambda _{\infty }$.
\end{itemize}
\end{lemma}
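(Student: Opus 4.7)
The plan is to extend the compactness/rescaling argument of Lemma \ref{4.15 AM} to accommodate the additional non-negative forcing term $\tau$, and then to close part (b) by combining Leray--Schauder homotopy invariance with a Picone-type lower bound that forces any putative solution to have arbitrarily large sup-norm as $\tau\to\infty$.

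\emph{Part (a).} I would argue by contradiction: suppose there are positive $u_n\in W_0^{1,p}(\Omega)$ and $\tau_n\geq 0$ with $\|u_n\|_{W_0^{1,p}}\to\infty$ and $u_n=(-\Delta_p)^{-1}(\lambda f^+(u_n)+\tau_n)$. Split on the behavior of $\sigma_n:=\tau_n/\|u_n\|_{W_0^{1,p}}^{p-1}$. If $\sigma_n$ is bounded, pass to a subsequence with $\sigma_n\to c\geq 0$ and normalize $v_n=u_n/\|u_n\|_{W_0^{1,p}}$; the argument of Lemma \ref{4.15 AM} goes through almost verbatim (the extra summand $\sigma_n$ converges to the constant $c$) and yields a nontrivial nonnegative $\bar v$ solving $-\Delta_p\bar v=\lambda f_p'(\infty)\bar v^{p-1}+c$. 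When $c=0$, $\bar v$ is a positive eigenfunction of $-\Delta_p$ at the level $\lambda f_p'(\infty)>\lambda_1$, contradicting the fact that $\lambda_1$ is the only eigenvalue admitting a positive eigenfunction. When $c>0$, V\'azquez's strong maximum principle gives $\bar v>0$ in $\Omega$ and Picone's identity (Allegretto--Huang) applied to $\varphi_1$ and $\bar v$ produces $\lambda_1\int_\Omega\varphi_1^p\geq \lambda f_p'(\infty)\int_\Omega\varphi_1^p+c\int_\Omega\varphi_1^p/\bar v^{p-1}>\lambda_1\int_\Omega\varphi_1^p$, again a contradiction. If instead $\sigma_n\to+\infty$, rescale by $\tau_n^{1/(p-1)}$ rather than by $\|u_n\|$: using the $(p-1)$-homogeneity of $-\Delta_p$, $w_n:=u_n/\tau_n^{1/(p-1)}$ satisfies $-\Delta_p w_n=\lambda f^+(u_n)/\tau_n+1$ with $\|w_n\|_{W_0^{1,p}}=\sigma_n^{-1/(p-1)}\to 0$, while $|\lambda f^+(u_n)/\tau_n|\leq \lambda C_f w_n^{p-1}\to 0$ in $L^{p'}$. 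Compactness of $(-\Delta_p)^{-1}:L^{p'}\to W_0^{1,p}$ then forces $w_n$ to converge to the nontrivial torsion function $(-\Delta_p)^{-1}(1)$, contradicting $\|w_n\|\to 0$.

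\emph{Part (b).} Fix $R$ as in (a) and view $T_\tau u:=u-(-\Delta_p)^{-1}(\lambda f^+(u)+\tau)$ as a family of compact perturbations of the identity. Lemma \ref{u-sln-f-mas-tau} shows every zero of $T_\tau$ is positive, so (a) gives $T_\tau\neq 0$ on $\partial B_R(0)$ for all $\tau\geq 0$; hence $\deg(T_\tau,B_R(0),0)$ is independent of $\tau$. For large $\tau$ one shows $T_\tau$ has no zero at all: Picone applied to $\varphi_1$ and a hypothetical positive zero $u$ yields $\tau\int_\Omega\varphi_1^p/u^{p-1}\leq (\lambda_1+\lambda C_f)\int_\Omega\varphi_1^p$, which forces $\|u\|_{L^\infty}^{p-1}\geq \tau/(\lambda_1+\lambda C_f)$; combined with $\|u\|_{W_0^{1,p}}<R$ and Lemma \ref{L2}, this is impossible once $\tau$ exceeds a threshold. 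Hence $\deg(T_0,B_R(0),0)=0$, and by the standard Leray--Schauder identification under the inversion $z=u/\|u\|^2$ between the index of $\Psi_+(\cdot,\lambda)$ at $0$ and the ``index at infinity'' of $T_0$ (cf.\ Section 4.4 in \cite{AM}), the conclusion $i(\Psi_+(\cdot,\lambda),0)=0$ follows.

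\emph{Main obstacle.} The technically most delicate points are the two uses of Picone's identity with $\bar v$ (resp.\ $u$) vanishing on $\partial\Omega$: strictly, the test function $\varphi_1^p/\bar v^{p-1}$ (resp.\ $\varphi_1^p/u^{p-1}$) must be shown to lie in $W_0^{1,p}(\Omega)$. I would handle this in the usual way, replacing the denominator by $\bar v+\varepsilon$ (resp.\ $u+\varepsilon$), deriving the inequality for the regularization, and passing to $\varepsilon\to 0^+$ using Hopf-type boundary behavior for $p$-harmonic-like solutions to guarantee integrability.
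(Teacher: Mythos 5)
Your part (a) is essentially correct and, in its sub-cases, takes a genuinely different route from the paper: for $\tau_n/\|u_n\|_{W_0^{1,p}}^{p-1}\to c>0$ the paper builds a positive eigenfunction of $-\Delta_p$ for $\lambda_1+\varepsilon$ by a sub/supersolution (truncation--penalization) construction between $t\phi_1$ and $v_n$, whereas you use Picone's inequality, which in fact handles $c\ge 0$ uniformly once $\bar v>0$ is known; and your rescaling by $\tau_n^{1/(p-1)}$ in the case $\tau_n/\|u_n\|^{p-1}\to\infty$, leading to the torsion function $(-\Delta_p)^{-1}(1)$, is cleaner and more explicit than the paper's terse treatment of that case. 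The admissibility of the Picone test function, which you flag, is a standard regularization and not a real obstacle.

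Part (b), however, has a genuine gap at the final step. What you compute is $\deg(T_0,B_R(0),0)$, the degree of the \emph{original} map on the \emph{interior} ball: it counts, with indices, all solutions $u$ with $\|u\|_{W_0^{1,p}}<R$, including $u=0$. The quantity to be computed, $i(\Psi_+(\cdot,\lambda),0)$, is the local index of the isolated zero $z=0$ of the \emph{inverted} map, which under $z=u/\|u\|^2$ encodes the behavior of the problem on the complementary region $\|u\|\ge R$ together with the point at infinity. There is no citable identification between these two numbers; on the contrary, the whole degree-theoretic multiplicity scheme (compare Lemma \ref{4.15 AM}(c) against the present lemma) rests on the possibility that such ``interior'' and ``asymptotic'' degrees differ. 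The two quantities do coincide here, but only because each can separately be shown to equal $\deg(I-A,B_1(0),0)$ for the asymptotic positively $1$-homogeneous map $A(u)=(-\Delta_p)^{-1}\bigl(\lambda f_p'(\infty)(u^+)^{p-1}\bigr)$; establishing those two equalities requires two further admissible homotopies (one on large spheres in $u$, one on small spheres in $z$) with a priori estimates that your part (a) does not provide, i.e.\ work of the same order as the lemma itself. The economical fix is to run your $\tau$-homotopy directly on the inverted problem: set $H(t,z)=z-(-\Delta_p)^{-1}\bigl(\lambda\|z\|^{2(p-1)}f^+(z/\|z\|^2)+t\bigr)$ on $B_\varepsilon(0)$ with $\varepsilon\le 1/R$. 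By homogeneity of $(-\Delta_p)^{-1}$, a zero $z\ne0$ of $H(t,\cdot)$ corresponds to $u=(-\Delta_p)^{-1}(\lambda f^+(u)+t\|u\|^{2(p-1)})$ with $\|u\|\ge R$, which part (a) excludes (this is exactly why (a) must be proved for \emph{all} $\tau\ge0$, not just $\tau\in[0,1]$); since also $H(1,0)=-(-\Delta_p)^{-1}(1)\ne0$, the map $H(1,\cdot)$ is zero-free on $\overline{B_\varepsilon(0)}$, so $\deg(H(1,\cdot),B_\varepsilon(0),0)=0$ and homotopy invariance gives $i(\Psi_+(\cdot,\lambda),0)=0$.
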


\begin{proof}
In order to prove a) we argue by contradiction. Actually, our
argument is similar to the one we used above when proving Lemma
\ref{4.15 AM} part a), but in this case it is more involved because
of the $\tau$-term. Assume there exist $\{ \tau _n \} _n \subset
[0,\infty )$ and a sequence $\{ u_n \} _n \subset W_0^{1,p} (\Omega
)$ of nonnegative functions such that $\| u_n \| _{W_0^{1,p}}
\longrightarrow \infty$ \, as \, $n\rightarrow \infty$ and
\begin{equation}
\label{seq u-n lema 4.16} u_n = (-\Delta_p)^{-1} (\lambda f^{+}(u_n
) +\tau _n ) \quad \text{ for every } n\in \mathbb{N}.
\end{equation}
Since $f^{+}(t)=f_p'(\infty )|t|^{p-2}t+g(t),$ where
$g(t)/|t|^{p-2}t \longrightarrow 0$ as $t\rightarrow +\infty $,
\eqref{seq u-n lema 4.16} can be written as
\begin{equation}
\label{seq u-n lema 4.16 2} u_n = (-\Delta_p)^{-1} (\lambda
f_p'(\infty ) {u }^{p-1} _n+\lambda g(u_n ) +\tau _n ) \quad \text{
for every } n\in \mathbb{N}.
\end{equation}
Let $v_n =u_n /\| u_n \| _{W_0^{1,p}} $ for every  $n\in
\mathbb{N}$. Then $v_n $ satisfies equation
\begin{equation}
\label{seq v-n lema 4.16 } v_n = (-\Delta_p)^{-1} \left( \lambda
f_p'(\infty ) {v }^{p-1} _n+\lambda \frac{g(u_n )}{\| u_n \| ^{p-1}}
+\frac{\tau _n }{\| u_n \| ^{p-1}} \right) \quad \text{ for all }
n\in \mathbb{N}.
\end{equation}
We may assume (by passing to a subsequence) that either
\begin{itemize}
\item[i)] $\frac{\tau _n }{\| u_n \| ^{p-1}} \longrightarrow c\geq
0$ \, as \, $n\rightarrow \infty$, or
\item[ii)] $\frac{\tau _n }{\| u_n \| ^{p-1}} \longrightarrow +\infty$ \, as \, $n\rightarrow \infty$.
\end{itemize}

Let us consider case i). Assume first that $c=0$. Since $\| v_n \|
_{W_0^{1,p}} =1$ for every $n\in \mathbb{N}$, we can suppose (by
taking a subsequence) that there exists $\overline{v}\in
W_0^{1,p}(\Omega )$ such that $v_n \rightharpoonup v$ (weakly) in
$W_0^{1,p}(\Omega )$ and \eqref{seq u-n lema 4.15 } holds true.
Arguing as in the proof of Lemma \ref{4.15 AM},
\begin{equation}
\label{weak conv 1} \lambda f_p'(\infty ) {v  }^{p-1} _n
\rightharpoonup \lambda f_p'(\infty ) {v }^{p-1} \quad \text{ and }
\quad  \frac{g(u_n )}{\| u_n \| ^{p-1}} \rightharpoonup
 0 \quad \text{ weakly in }\quad L^{p'} (\Omega ),
\end{equation}
and by our assumption that $c=0$ in i),
\begin{equation}
\label{weak conv 2} \frac{\tau _n}{\| u_n \| ^{p-1}} \rightharpoonup
 0 \quad \text{ weakly in }\quad L^{p'} (\Omega ).
\end{equation}

Since $(-\Delta_p)^{-1} : L^{p'} (\Omega ) \longrightarrow
W_0^{1,p}({\Omega } )$ is a compact operator, it follows from
\eqref{seq v-n lema 4.16 }, \eqref{weak conv 1} and \eqref{weak conv
2}
\begin{equation}
\label{v positive eigenfunction}
 v = (-\Delta_p)^{-1} \left( \lambda
f_p'(\infty ) {v }^{p-1}  \right) \Leftrightarrow -\Delta_p v =
\lambda f_p'(\infty ) {v }^{p-1}.
\end{equation}
Arguing as we did above after getting \eqref{21-b}, we get that the
nonnegative function $\overline{v}$ is also nonzero. Thus, \eqref{v
positive eigenfunction} provides a contradiction since $\lambda >
\lambda _1 (p)/ f_p'(\infty )$.

\vskip 8pt

We now assume $\frac{\tau _n }{\| u_n \| ^{p-1}} \longrightarrow c>0
$ \, as \, $n\rightarrow \infty$. Let $\varepsilon \in (0, \lambda
f_p'(\infty )-\lambda _1 )$. Using a standard argument, the
following claim can be demonstrated.\\

\emph{\textbf{Claim}}: there exists a large $n$ such that $v_n$ is
 a weak positive supersolution $\omega \in W_0^{1,p}({\Omega } )$ of
problem
\begin{equation}\label{supersolution-omega}
\begin{cases}
\begin{aligned}
 %\begin{equation}
 -\Delta _p \omega &= (\lambda _1 +\varepsilon )\omega ^{p-1} \quad \ \text{ in }\, \Omega,\\
 \omega &=0  \quad \ \text{ on }\, \partial \Omega.
\end{aligned}
\end{cases}
\end{equation}

\vskip 8pt

Now, for every $t>0$ and a positive eigenfunction $\phi _1$
corresponding to $\lambda _1$, $t\phi _1$ is a subsolution of
problem \eqref{supersolution-omega}. Let $v_n$ be a positive
supersolution of \eqref{supersolution-omega}. Using that
$\frac{\partial v_n }{\partial \overrightarrow{n}} <0$ and
$\frac{\partial \phi _1 }{\partial \overrightarrow{n}} <0$ on
$\partial \Omega$ (where $\overrightarrow{n}$ denotes the outer unit
normal on $\partial \Omega$), one can prove there exists $t>0$ such
that $t\phi _1 \leq v_n$ on $\Omega$. Using standard truncation and
penalization techniques (see e.g. \cite{DKT}, the appendix in
\cite{GS}, or
 Section 4.5 in \cite{GP}), it can be proved the existence of a solution $\omega
\in W_0^{1,p}({\Omega } ) \cap L^{\infty }(\Omega )$, of problem
\eqref{supersolution-omega}, such that $t\phi _1 \leq \omega \leq
v_n$ in $\Omega$. Thus $\omega$ is a positive eigenfunction
corresponding to the eigenvalue $\lambda _1 +\varepsilon \neq \lambda
_1$. This is a contradiction that shows case
i) above cannot actually occur.\\

Let us now consider case ii). Arguing as in case i), from \eqref{seq
v-n lema 4.16 } it follows that, for  $n\in \mathbb{N}$ sufficiently
large, inequality $-\Delta _p v_n \geq \lambda \gamma {v }^{p-1} _n$
holds true. Then, the same argument as presented in case i) follows,
and we also get a contradiction. We have completed the proof of part
a).

\vskip 12pt

We now prove b). Let  a) $\lambda > \lambda _{\infty }$. From a),
taking $\tau= t$, we know that $u\neq (-\Delta_p)^{-1} (\lambda
f^{+}(u) +t  )$ for every $t\in [0,1]$ and every $u\in
W_0^{1,p}(\Omega )$ with $\| u\| _{W_0^{1,p}} \geq R$. Using again
inversion $z=u/\| u\|^2 _ {{W_0^{1,p}}}$ and the homogeneity of
$(-\Delta_p)^{-1}$, we observe that
\begin{equation}
\label{degree at zero} z\neq (-\Delta_p)^{-1} \left( \lambda \|
z\|^{2(p-1)} f^{+}\left( z/\| z\|^2 \right) +t \right)
\end{equation}
 for every $t\in [0,1]$ and every $z\in
W_0^{1,p}(\Omega )$ such that $0<\| z\| _{W_0^{1,p}} \leq 1/R$. Let
$\varepsilon \in (0, 1/R )$. We now define homotopy $H: [0,1]\times
B_\varepsilon (0)  \longrightarrow W_0^{1,p}(\Omega )$ as
$$H(t,z)=z-(-\Delta_p)^{-1} \left( \lambda \| z\|^{2(p-1)} f^{+}\left( z/\|
z\|^2 \right) +t  \right) \quad \text{for every } \quad z\neq 0, $$
and $H(t,0):= -(-\Delta_p)^{-1} (t)$. Using the same arguments we
used above it can be proved that $H$ is actually continuous, and
also that it is of the form identity $-$ compact.

\vskip 8pt

Using the homotopy invariance property of Leray-Schauder degree, we
obtain
$$\deg(H(0,\cdot), B_{\varepsilon } (0),0)=\deg(H(1,\cdot), B_{\varepsilon } (0),0).$$
On the other hand, $\deg(H(0,\cdot), B_{\varepsilon } (0),0)=\deg(\Psi
_{+}( \cdot,\lambda ), B_{\varepsilon } (0),0)$ and, from  \eqref{degree at
zero} and the definition of $H$,

$$\deg(H(1,\cdot), B_{\varepsilon } (0),0)=0. $$
\end{proof}

\emph{Proof of Theorem \ref{AM-lemma-around-infty}.} Lemmas
\ref{4.15 AM} and \ref{4.16 AM} assert that $i(\Psi _{+}(.,\lambda
),0)=1$ when $\lambda <\lambda _{\infty }$, and $i(\Psi
_{+}(.,\lambda ),0)=0$ when $\lambda >\lambda _{\infty }$. The fact
that these two local degrees are different allows one to repeat the
original arguments used by P. Rabinowitz to prove his global
bifurcation theorem (see \cite{Ra}, \cite{R}, and \cite{AM} Sections
4.3 and 4.4). $\square$

\vskip 8pt

We now prove the existence of two solutions for problem \eqref{PL}.
Since $\Sigma _{\infty }^{+}$ bifurcates from $(\infty ,\lambda _1
/{f_p}'(\infty ) )$, there exist elements $(u,\lambda ) \in \Sigma
_{\infty }^{+}$ such that $\| u \| _{W_0^{1,p} (\Omega )}$ is
arbitrarily large and $\lambda $ is near $\lambda _1 /f_p'(\infty)$.
Hence, because of inequality \eqref{L2-1} in Lemma \ref{L2}, there
exist elements $(u,\lambda ) \in \Sigma _{\infty }^{+}$ such that
$\mathcal{N} _{\infty } (u,\lambda )=\| u \| _{L^{^{\infty}} (\Omega
)} >\alpha$. Lemma \ref{continuity-sup-norm} implies that
$\mathcal{N} _{\infty } (\overline{\Sigma _{\infty }^{+}})$ is
connected. Thus, Lemma \ref{est-sup-norm2} implies that
\begin{equation}
\label{norm-Linfty-in-Sigma-inf-+} \| u \| _{L^{{\infty}} (\Omega )}
>\alpha \quad \forall (u,\lambda )\in \overline{\Sigma _{\infty }^{+}}.
\end{equation}
Because of inequality \eqref{L2-2} in Lemma \ref{L2},
\begin{equation}
\label{norm-L2-in-Sigma-infty-+} \| u \| _{W_0^{1,p} (\Omega )} >
(K_2 )^{-1} \alpha \quad \forall (u,\lambda )\in \overline{\Sigma
_{\infty }^{+}} \cap (W_0^{1,p} (\Omega ) \times [0,2]).
\end{equation}
Now we claim that there exists an element of the form $(u_1,1)\in
\overline{\Sigma _{\infty }^{+}}$. Let us argue by contradiction.
Assume this is not true. Consider the cylinder
$$P=  \{ (u,\lambda ) \in W_0^{1,p} (\Omega ) \times \mathbb{R} \, : \,
\, \lambda \in [0,1], \, \| u \| _{W_0^{1,p}}  \geq (K_2 )^{-1}
\alpha\}.$$ Hypothesis ($f_2$) implies that $\lambda _1
/f_p'(\infty)<1$. Therefore, from Theorem
\ref{AM-lemma-around-infty} it follows that $intP \cap
\overline{\Sigma _{\infty }^{+}} \neq \emptyset$. Also, since
$\Sigma _{\infty }^{+}$ corresponds to the unbounded connected
component $\Gamma _{\infty }^{+}$ of $\Gamma ^{+}$, then
$int(W_0^{1,p} (\Omega ) \times \mathbb{R}\setminus P) \cap
\overline{\Sigma _{\infty}^{+}} \neq \emptyset$. From
\eqref{norm-L2-in-Sigma-infty-+} and our assumption, $\partial P
\cap \overline{\Sigma _{\infty }^{+}}=\emptyset$. Thus, $\partial P$
separates $\overline{\Sigma _{\infty }^{+}}$, i.e.
$$\overline{\Sigma _{\infty }^{+}} \subset int P \cup int(W_0^{1,p} (\Omega )
\times \mathbb{R}\setminus P),$$ which contradicts the connectedness
of $\overline{\Sigma _{\infty }^{+}}$. This contradiction shows
there exists $(u_1,1)\in \overline{\Sigma _{\infty}^{+}}$. From
Theorem \ref{AM-lemma-around-infty}, $u_1 \neq 0$, i.e. $(u_1 ,1
)\in \Sigma _{\infty}^{+} \subset S^{+}$. As mentioned above, this
means $u_1
>0$ on $\Omega$  and $u_1$ satisfies \eqref{PL}. In a similar
fashion we obtain a negative solution $v_1$. The previous argument
shows these two solutions have $L^{\infty}$-norm greater than
$\alpha$.

\subsection{Bifurcation from zero}

First we state the following analogue of Theorem
\ref{AM-lemma-around-infty}.

%%M. Del Pino and R. Manásevich proved the following lemma (see
%%\cite{DM}).

\begin{theorem}
\label{AM-lemma-around-zero} There exists an unbounded connected
component $\Sigma _0^{+} $ of $S^{+}$ so that $(0,\lambda _1
/f_p'(0) )$ belongs to $\overline{\Sigma _0^{+}}$ and if $(0,\lambda
) \in \overline{\Sigma _0^{+}}$ then $\lambda =\lambda _1 /f_p'(0)$.
Also, there exists an unbounded connected component $\Sigma _0^{-} $
of $S^{-}$ such that $(0,\lambda _1 /f_p'(0) )\in \overline{\Sigma
_0^{-}}$ and if $(0,\lambda ) \in \overline{\Sigma _0^{-}}$ then
$\lambda =\lambda _1 /f_p'(0)$.
\end{theorem}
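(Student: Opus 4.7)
The plan is to apply Rabinowitz's global bifurcation theorem directly to the map
$$\Phi_{+}(u,\lambda) := u - (-\Delta_p)^{-1}(\lambda f^{+}(u)),$$
without any inversion (in contrast to the construction of $\Psi_{+}$ used for bifurcation from infinity, where the inversion $z=u/\|u\|^{2}$ is forced because $\infty$ is not a point of the space). Since $(-\Delta_p)^{-1}$ is compact, $\Phi_{+}$ is a compact perturbation of the identity, so Leray--Schauder degree is available. The proof will parallel that of Theorem \ref{AM-lemma-around-infty}, with hypothesis $(f_3)$ replaced by $(f_2)$ and with the decomposition $f(t)=f_p'(0)|t|^{p-2}t+\widehat{g}(t)$, where $\widehat{g}(t)/|t|^{p-2}t\to 0$ as $t\to 0$, replacing the corresponding decomposition near infinity.

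The first step is to identify the only possible bifurcation point from zero inside $S^{+}$. Given a sequence $(u_n,\lambda_n)\in S^{+}$ with $u_n\to 0$ in $W_0^{1,p}(\Omega)$ and $\lambda_n\to\lambda^{*}$, set $v_n:=u_n/\|u_n\|_{W_0^{1,p}}$ and rewrite the equation as
$$v_n=(-\Delta_p)^{-1}\!\left(\lambda_n f_p'(0)\,v_n^{p-1}+\frac{\lambda_n\widehat{g}(u_n)}{\|u_n\|_{W_0^{1,p}}^{p-1}}\right).$$
Repeating, almost verbatim, the weak-convergence argument from the proof of Lemma \ref{4.15 AM} (the only new point is that the perturbation $\widehat{g}(u_n)/\|u_n\|^{p-1}\rightharpoonup 0$ weakly in $L^{p'}$ thanks to $\widehat{g}(t)/|t|^{p-2}t\to 0$ as $t\to 0$, with the role of the upper truncation $M_{\varepsilon}$ reversed into a lower one), the compactness of $(-\Delta_p)^{-1}$ yields $v_n\to\bar v$ strongly, with $\bar v\geq 0$, $\|\bar v\|_{W_0^{1,p}}=1$, and
$$-\Delta_p\bar v=\lambda^{*}f_p'(0)\,\bar v^{p-1}.$$
Since $\bar v$ is a nonnegative nontrivial eigenfunction, $\lambda^{*}f_p'(0)=\lambda_1(p)$, so $\lambda^{*}=\lambda_1/f_p'(0)=:\lambda_{0}$.

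Next, compute the index of $\Phi_{+}(\,\cdot\,,\lambda)$ at $0$ for $\lambda\neq\lambda_{0}$. For $\lambda<\lambda_{0}$, the same scaling contradiction as above shows that there is $r>0$ with $\Phi_{+}(u,\mu)\neq 0$ for all $0<\|u\|_{W_0^{1,p}}\leq r$ and all $\mu\in[0,\lambda]$; the homotopy $\mu\mapsto\Phi_{+}(\,\cdot\,,\mu)$ from $\Phi_{+}(\,\cdot\,,\lambda)$ to the identity then yields $i(\Phi_{+}(\,\cdot\,,\lambda),0)=1$. For $\lambda>\lambda_{0}$, I adapt Lemma \ref{4.16 AM}: show there exist $r>0$ so that the perturbed equation $u=(-\Delta_p)^{-1}(\lambda f^{+}(u)+\tau)$ has no positive solution with $0<\|u\|_{W_0^{1,p}}\leq r$ for any $\tau\geq 0$, by arguing that such a $u$ would be a positive weak supersolution of $-\Delta_p\omega=(\lambda_1+\varepsilon)\omega^{p-1}$ for a suitable $\varepsilon\in(0,\lambda f_p'(0)-\lambda_1)$; comparison with the subsolution $t\phi_1$ (via the sign of the outer normal derivative furnished by Lemma \ref{u-sln-f-mas-tau}) and the truncation/penalization technique then produces an eigenfunction for $\lambda_1+\varepsilon$, a contradiction. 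The homotopy in $\tau\in[0,1]$ together with the absence of a zero of $u\mapsto u-(-\Delta_p)^{-1}(\lambda f^{+}(u)+1)$ on $\overline{B_r(0)}$ gives $i(\Phi_{+}(\,\cdot\,,\lambda),0)=0$.

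With the index jump at $\lambda_{0}$ in hand, Rabinowitz's global bifurcation theorem produces a connected component $\Sigma_{0}^{+}$ of $\overline{S^{+}}$ passing through $(0,\lambda_{0})$ that satisfies the standard alternative: either $\Sigma_{0}^{+}$ is unbounded in $W_0^{1,p}(\Omega)\times\mathbb{R}$, or it meets the trivial line at some $(0,\mu)$ with $\mu\neq\lambda_{0}$. Step two rules out the second possibility, so $\Sigma_{0}^{+}$ is unbounded and every trivial limit point of $\Sigma_{0}^{+}$ is exactly $(0,\lambda_{0})$. The component $\Sigma_{0}^{-}$ is obtained by the symmetric argument applied to $f^{-}$. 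The main technical obstacle is the adaptation of Lemma \ref{4.16 AM} to the small-$u$ regime: although the strategy is formally identical, the scaling properties of $-\Delta_p$ and the need to build a subsolution $t\phi_1$ compatible with arbitrarily small positive supersolutions require the Hopf-type estimate from Lemma \ref{u-sln-f-mas-tau} together with a careful choice of $\varepsilon$ depending on $\lambda$.
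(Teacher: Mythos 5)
Your proposal is correct and follows exactly the route the paper itself indicates: the paper gives no detailed proof of this theorem, only a remark that it follows either from Del Pino--Man\'asevich (Theorem 1.1 and Lemma 3.1 of \cite{DM}) or ``by using the same ideas we used above to prove Theorem \ref{AM-lemma-around-infty}'', and your argument is precisely the second option, with the decomposition $f(t)=f_p'(0)|t|^{p-2}t+\widehat{g}(t)$ replacing the one at infinity and no inversion needed since the bifurcation is from $u=0$. The two places you flag as requiring care (the weak vanishing of $\widehat{g}(u_n)/\|u_n\|^{p-1}$ with the truncation reversed, and the supersolution comparison for $\lambda>\lambda_0$) are handled at the same level of detail as the paper's own Lemmas \ref{4.15 AM} and \ref{4.16 AM}, so there is no gap relative to the paper's standard of rigor.
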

\textbf{Remark:} This result is essentially an adaptation of Lemma
3.1 in \cite{DM} to our case, and it can be proved either by
following the arguments of \cite{DM} (Theorem 1.1 and Lemma 3.1) or
by using the same ideas we used above to prove Theorem
\ref{AM-lemma-around-infty}.

\vskip 8pt

We now prove the existence of two additional solutions for problem
\eqref{PL}. Since \linebreak $(0,\lambda _1 /{f_p}'(0) )\in
\overline{\Sigma _0^{+}}$, there exist elements $(u,\lambda ) \in
\Sigma _0^{+}$ such that $\| u \| _{W_0^{1,p} (\Omega )}$ is close
to zero and $\lambda $ is near $\lambda _1 /f_p'(0)$. Hence, because
of inequality \eqref{L2-2} in Lemma \ref{L2}, there exist elements
$(u,\lambda ) \in \Sigma _0^{+}$ such that $\mathcal{N} _{\infty }
(u,\lambda )=\| u \| _{L^{^{\infty}} (\Omega )} <\alpha$. From Lemma
\ref{continuity-sup-norm} it follows that $\mathcal{N} _{\infty }
(\overline{\Sigma _0^{+}})$ is connected. Thus, Lemma
\ref{est-sup-norm2} implies that
\begin{equation}
\label{norm-Linfty-in-Sigma-0-+} \| u \| _{L^{{\infty}} (\Omega )}
<\alpha \quad \forall (u,\lambda )\in \overline{\Sigma _0^{+}}.
\end{equation}
Because of inequality \eqref{L2-1} in Lemma \ref{L2},
\begin{equation}
\label{norm-L2-in-Sigma-0-+} \| u \| _{W_0^{1,p} (\Omega )} < K_1
\alpha \quad \forall (u,\lambda )\in \overline{\Sigma _0^{+}} \cap
(W_0^{1,p} (\Omega ) \times [0,2]).
\end{equation}
Now we claim that there exists $(u_2,1)\in \overline{\Sigma
_0^{+}}$. Let us argue by contradiction. Assume this is not true.
Define the cylinder
$$P=  \{ (u,\lambda ) \in W_0^{1,p} (\Omega ) \times \mathbb{R} \, : \,
\, \lambda \in [0,1], \, \| u \| _{W_0^{1,p}}  \leq K_1 \alpha\}.$$
Hypothesis ($f_2$) implies that $\lambda _1 /f_p'(0)<1$. Therefore,
from Theorem \ref{AM-lemma-around-zero} it follows that $intP \cap
\overline{\Sigma _0^{+}} \neq \emptyset$. Also, the unboundedness of
$\overline{\Sigma _0^{+}}$ implies $int(W_0^{1,p} (\Omega ) \times
\mathbb{R}\setminus P) \cap \overline{\Sigma _0^{+}} \neq
\emptyset$. From \eqref{norm-L2-in-Sigma-0-+} and our assumption,
$\partial P \cap \overline{\Sigma _0^{+}}=\emptyset$. Thus,
$\partial P$ separates $\overline{\Sigma _0^{+}}$, i.e.
$$\overline{\Sigma _0^{+}} \subset int P \cup int(W_0^{1,p} (\Omega )
\times \mathbb{R}\setminus P),$$ which contradicts the connectedness
of $\overline{\Sigma _0^{+}}$. This contradiction shows there exists
$(u_2,1)\in \overline{\Sigma _0^{+}}$. From Theorem
\ref{AM-lemma-around-zero}, $u_2 \neq 0$, i.e. $(u_2 ,1 )\in \Sigma
_0^{+} \subset S^{+}$. As mentioned above, this means $u_2
>0$ on $\Omega$ and $u_2$ satisfies \eqref{PL}.

\vskip 8pt

Arguing in a similar fashion with $\overline{\Sigma _0^{-}}$, the
existence of a negative solution $v_2$ of \eqref{PL} is obtained.
From \eqref{norm-Linfty-in-Sigma-0-+} (and its analogue for
$\overline{\Sigma _0^{-}}$) we have $\| u_2 \| _{L^{{\infty}} }, \|
v_2 \| _{L^{{\infty}} } <\alpha$. We summarize the arguments
presented above in the following bifurcation diagram.

\newpsobject{grilla}{psgrid}{subgriddiv=1,griddots=10,gridlabel=6pt}
\begin{center}
\psset{xunit=1cm,yunit=1cm}
\begin{pspicture}(-1,-1)(10.4,10)
%\grilla
 \psaxes[labels=none,ticksize=0.3pt]{<->}(0,0)(-1,9)(9,-1)
\uput[r](-0.9,9.5){$||\cdot||_{W_0^{1,p}}$}
 \uput[r](9.2,0){$\lambda$}

%%%%%%%EL NIVEL ALPHA%%%%%%%%%%%
 \psplot[plotstyle=curve,linewidth=10.1pt]{0.7}{7.6}%
{200 x mul sin 0.3 mul 4 add}
 \uput[r](7.8,4.2){$||u||_{L^\infty}=\alpha$}
%%%%%%%%%%%%%%%%%%%%%%%%%%%%%%%%%%%%%%%%%%%%%%%%%%%%%%

\uput[r](9.1,2){$\overline{\Sigma_0^{+}}$}
\uput[r](9.1,3){$\overline{\Sigma_0^{-}}$}
\uput[r](9.1,5.8){$\overline{\Sigma_\infty^{+}}$}
\uput[r](9.1,7){$\overline{\Sigma_\infty^{-}}$}
\psplot[plotstyle=curve,linewidth=1.1pt]{1}{9}%
 { x  1 sub sqrt }
   \psplot[plotstyle=curve,linewidth=1.1pt]{1}{9}%
 { x  sqrt 1 sub}
  \uput[r](0.4,-0.4){$\frac{\lambda_1}{f_p^\prime(0)}$}

%%%%%%%%LAS DE INFINITO%%%%%%%

  \psplot[plotstyle=curve,linewidth=1.1pt]{1.5}{9}%
 { x  1.5 sub sqrt neg 1.2 mul 9 add}
 \psplot[plotstyle=curve,linewidth=1.1pt]{1.5}{9}%
 { x  0.734375 sub sqrt neg 9.875 add}
  \uput[r](.9,9.4){$\frac{\lambda_1}{f_p^\prime(\infty)}$}
 \psline[linestyle=dashed]{<->}(6.5,-0.4)(6.5,9.4)
\uput[r](5.9,-0.5){$\lambda=1$}
\end{pspicture}
\end{center}


\begin{thebibliography}{11}


\bibitem[AGP]{AGP} A. Ambrosetti, J. Garcia Azorero, and I. Peral, \textit{Multiplicity results for some nonlinear elliptic equations}, J. Functional Analysis 137, 219-242 (1996).

\bibitem[AH]{AH} A. Ambrosetti and P. Hess, \emph{Positive solutions of asymptotically linear elliptic
eigenvalue problems}, J. Math. Anal. Appl. 73 (2), 1980, 411-422.

\bibitem[AM]{AM} A. Ambrosetti and A. Malchiodi, \textit{Nonlinear Analysis and Semilinear Elliptic Problems}, Cambridge Studies in Advanced Mathematics 104, Cambridge University Press, 2007.

\bibitem[CH]{CH} J. Cossio, and  S. Herr\'on, \textit{Existence of radial solutions for an asymptotically linear $p$-Laplacian problem}, J. Math. Anal. Appl. 345 (2008) 583-592.

\bibitem[CHV1]{CHV1} J. Cossio, S. Herr\'on and  C. V\'elez,
\textit{Multiple solutions for nonlinear Dirichlet problems via bifurcation and additional results},  J. Math. Anal. Appl. 399(2013), 166-179.

\bibitem[CHV2]{CHV2} J. Cossio, S. Herr\'on, and C. V\'elez, \textit{Infinitely many radial solutions for a $p$-Laplacian problem $p$-superlinear at the origen}, J. Math. Anal. Appl. 376 (2011) 741-749.

\bibitem[DGTU]{DGTU} P. Drabek,
P. Girg, P. Takac, M. Ulm, \emph{The Fredholm Alternative for the
p-Laplacian: Bifurcation from Infinity, Existence and Multiplicity},
Indiana University Mathematics Journal, Vol. 53, No. 2 (2004),
433-482.


\bibitem[Di]{Di} E. DiBenedetto, \emph{$\mathcal {C}^{1+\alpha}$ local regularity of weak solutions of degenerate elliptic equations}, Nonlinear Anal. 7 (1983), No. 8, 827-850.

\bibitem[DM]{DM} M. Del Pino and R. Manásevich, \emph{Global bifurcation
from the eigenvalues of the p-Laplacian}, Journal of Diff. Eqns. 92
(1991), 226-251.

\bibitem[DQ]{DQ} Leandro M. Del Pezzo and Alexander Quaas, \emph{Global bifurcation for fractional p-Laplacian
and an application}, arXiv:1412.4722v2 (2016).



\bibitem[Drab]{Drab} P. Drábek, \emph{Asymptotic bifurcation problems for quasilinear
equations, existence and multiplicity results}, Topol. Methods
Nonlinear Anal. 25 (2005), No. 1, 183-194.

\bibitem[DKT]{DKT} Pavel Drábek, Pavel Krejcí,
Peter Takác, \emph{Nonlinear differential equations}, Chapman, 1999.


\bibitem[FNSS]{FNSS} S. Fucik, J. Nekas, J. Soucek, and K. Soucek, \textit{Spectral analysis of nonlinear operators}, Lecture Notes in Mathematics, 346, Springer Verlag, 1973.

\bibitem[GP]{GP} L. Gasinski and N. Papageorgiou, \textit{Nonlinear Analysis}, Series in Mathematical Analysis and Applications Vol. 9, Chapman $\&$ Hall, 2006.

\bibitem[GS]{GS} J. García Melián and J. Sabina de Lis, \textit{Uniqueness to quasilinear problems for the p-Laplacian in radially symmetric domains}, Nonlinear Anal. 43 (2001), 803-835.



\bibitem[Le]{Le} A. L$\hat{e}$, {\it On the local Holder continuity of the inverse of the $p$-Laplace operator},
Proc. AMS 135(1988), No. 11, 3353-3560.

\bibitem[Li]{Li} G.M. Lieberman, {\it Boundary regularity for solutions of degenerate elliptic equations},
Nonlinear Anal. 12 (1988), No. 11, 1203-1219.

\bibitem[Ra]{Ra} Paul H. Rabinowitz, \textit{Some global results for nonlinear eigenvalue
problems}, Journal of Functional Analysis 7, 487-513 (1971).

\bibitem[R]{R} P. H. Rabinowitz, \textit{Some aspects of nonlinear eigenvalue problems},
Rocky Mountain J. Math. 3 (1973), 161-202.



\bibitem[R2]{R2}P. Rabinowitz, \textit{Global aspects of bifurcation},
in ``Topological methods in bifurcation theory", Sem. Math. Sup. 91,
Univ. Montreal, Montreal, 1985, pp. 63-112.


\bibitem[T]{T} P. Tolksdorf, \textit{Regularity for a more general class of quasilinear elliptic equations},
J. Differential Equations 51 (1984), 126-150.




\bibitem[V]{V} J. L. V\'azquez, \textit{A strong maximum principle for some quasilinear elliptic equations}, Appl. Math. Optim. 12 (1984), 191-202.


\end{thebibliography}
\end{document}